\definecolor{sepia}{cmyk}{0, 0.83, 1, 0.70}
\definecolor{mahogany}{cmyk}{0   , 0.85, 0.87, 0.35}
\definecolor{olivegreen}    {cmyk}{0.64, 0   , 0.95, 0.40}
\newtheorem{theorem}{Theorem}
\newtheorem{corollary}[theorem]{Corollary}
\newtheorem{lemma}[theorem]{Lemma}
\newtheorem{proposition}[theorem]{Proposition}
\theoremstyle{definition}
\newtheorem{remark}[theorem]{Remark}
\numberwithin{equation}{section} 
\numberwithin{theorem}{section}  
\renewenvironment{proof}[1][\proofname]
{\par
	\pushQED{$\blacksquare$} 
	\normalfont\topsep6\p@\@plus6\p@\relax
	\trivlist
	\item[\hskip\labelsep\bfseries#1\@addpunct{.}]
	\ignorespaces}
{\popQED \endtrivlist\@endpefalse}
\DeclareMathOperator*{\fix}{Fix}
\begin{document}

\title{\vspace{-3em} {\textbf{Polynomial Estimates for the Method of Cyclic Projections in Hilbert Spaces}}}

\author{Simeon Reich\thanks{Department of Mathematics,
    The Technion -- Israel Institute of Technology, 3200003 Haifa, Israel;
    E-mail: \texttt{sreich@technion.ac.il}. } \and
    Rafa\l\ Zalas\thanks{Department of Mathematics,
    The Technion -- Israel Institute of Technology, 3200003 Haifa, Israel;
    E-mail: \texttt{rafalz@technion.ac.il}.  }
}

\maketitle

\begin{abstract}
  We study the method of cyclic projections when applied to closed and linear subspaces $M_i$, $i=1,\ldots,m$, of a real Hilbert space $\mathcal H$. We show that the average distance to individual sets enjoys a polynomial behaviour $o(k^{-1/2})$ along the trajectory of the generated iterates. Surprisingly, when the starting points are chosen from the subspace $\sum_{i=1}^{m}M_i^\perp$, our result yields a polynomial rate of convergence $\mathcal O(k^{-1/2})$ for the method of cyclic projections itself. Moreover, if $\sum_{i=1}^{m} M_i^\perp$ is not closed, then both of the aforementioned rates are best possible in the sense that the corresponding polynomial $k^{1/2}$ cannot be replaced by $k^{1/2+\varepsilon}$ for any $\varepsilon >0$.

  \bigskip
  \noindent \textbf{Key words and phrases:} Product space; rates of asymptotic regularity; rates of convergence.

  \bigskip
  \noindent \textbf{2010 Mathematics Subject Classification:} 41A25, 41A28, 41A44, 41A65.
\end{abstract}

\section{Introduction}
Let $\mathcal H$ be a real Hilbert space with its inner product denoted by $\langle \cdot, \cdot \rangle$ and its induced norm denoted by $\|\cdot\|$. Throughout this paper we assume that for each $i = 1, \ldots, m$, the set $M_i$ is a closed and linear subspace of $\mathcal H$ and we put $M := \bigcap_{i=1}^m M_i$. We denote by $P_{M_i}$ and $P_M$, the orthogonal projections onto $M_i$ and $M$, respectively, $i = 1,\ldots,m.$ The method of cyclic projections for the subspaces $M_i$ is defined by
\begin{equation}\label{int:yk}
  y_0 \in \mathcal H, \quad y_{k} := (P_{M_m} \ldots P_{M_1})^k(y_0), \quad k = 1,2,\ldots,
\end{equation}
where in order to shorten the notation, we put
\begin{equation}\label{int:T}
  T := P_{M_m} \ldots P_{M_1}.
\end{equation}
Thanks to von Neumann \cite{Neumann1949} ($m=2$) and Halperin \cite{Halperin1962} ($m \geq 2$), we know that:
\begin{theorem}\label{thm:norm}
  For each $y_0\in\mathcal H$, we have $\|y_k-P_M(y_0)\| \to 0$ as $k \to \infty$.
\end{theorem}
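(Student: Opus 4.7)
My plan is to use the orthogonal decomposition $\mathcal{H} = \fix(T) \oplus \overline{\image(I-T)}$ and reduce to showing $T^k v \to 0$ on the second summand. The starting observation is that $T$ is a linear contraction with $\fix(T) = M$: if $Tx = x$, then $\|x\| = \|P_{M_m}\cdots P_{M_1} x\| \leq \|P_{M_1} x\| \leq \|x\|$ forces equality throughout, so $P_{M_1}x = x$ (i.e., $x \in M_1$), and iterating gives $x \in M_i$ for every $i$. Since each $P_{M_i}$ is self-adjoint, $T^* = P_{M_1}\cdots P_{M_m}$ is a cyclic product over the same subspaces (in reverse order), so the same argument yields $\fix(T^*) = M$. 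Via the standard closed-range identity $\overline{\image(I-T)} = \fix(T^*)^\perp$, this produces the orthogonal splitting $\mathcal{H} = M \oplus M^\perp$ with $M^\perp = \overline{\image(I-T)}$.

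Writing $y_0 = P_M y_0 + v$ with $v := y_0 - P_M y_0 \in M^\perp$, and using $TP_M y_0 = P_M y_0$ (since $P_M y_0 \in M \subseteq M_i$ for every $i$), the conclusion reduces to $T^k v \to 0$ for $v \in \overline{\image(I-T)}$. The key quantitative input will be a step-by-step telescoping: setting $x_k^{(0)} := T^k w$ and $x_k^{(i)} := P_{M_i} x_k^{(i-1)}$ (so that $x_k^{(m)} = T^{k+1} w$), every projection step satisfies the Pythagorean identity
\begin{equation*}
  \|x_k^{(i-1)}\|^2 - \|x_k^{(i)}\|^2 = \|x_k^{(i-1)} - x_k^{(i)}\|^2,
\end{equation*}
so summing over $k = 0, 1, \ldots$ and $i = 1, \ldots, m$ yields $\sum_{k,i} \|x_k^{(i-1)} - x_k^{(i)}\|^2 \leq \|w\|^2 < \infty$. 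In particular $\|x_k^{(i-1)} - x_k^{(i)}\| \to 0$ for each $i$, and the triangle inequality delivers $\|T^k w - T^{k+1} w\| \to 0$. For $v = (I - T) w \in \image(I-T)$ this is exactly $T^k v \to 0$.

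The hard part — and the only remaining subtlety — is passing from $v \in \image(I-T)$ to a general $v \in \overline{\image(I-T)}$, since the orbit $(T^k w)$ itself need not be Cauchy. I would handle this by a density $\varepsilon/2$ argument: pick $v_n = (I-T) w_n \to v$ and use the non-expansiveness of $T^k$ to estimate $\|T^k v\| \leq \|T^k v_n\| + \|v - v_n\|$; letting $k \to \infty$ first and then $n \to \infty$ gives $T^k v \to 0$. Combined with the reduction, $\|y_k - P_M y_0\| = \|T^k v\| \to 0$. The adjoint computation $\fix(T^*) = M$ is therefore the real pivot of the argument: without it, the direct sum decomposition — and with it the entire scheme — collapses.
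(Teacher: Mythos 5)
Your proof is correct, but it is worth noting that the paper does not prove Theorem \ref{thm:norm} at all: it simply cites von Neumann ($m=2$) and Halperin ($m\geq 2$), so your argument is a self-contained alternative rather than a parallel of anything in the text. Each step checks out: $\fix(T)=M$ follows from the norm-equality argument plus the Pythagorean identity for projections onto subspaces; the same reasoning applied to $T^*=P_{M_1}\cdots P_{M_m}$ gives $\fix(T^*)=M$, and the identity $\overline{\image(I-T)}=\ker(I-T^*)^\perp$ then yields $\mathcal H=M\oplus\overline{\image(I-T)}$ (your label ``closed-range identity'' is a misnomer --- no closed-range hypothesis is used, only $\overline{\image(A)}=\ker(A^*)^\perp$ --- but the mathematics is right); the telescoping of the exact Pythagorean identities over the $m$ sub-steps gives $\sum_k\|T^kw-T^{k+1}w\|^2$-type summability and hence asymptotic regularity, so $T^k v\to 0$ on $\image(I-T)$; and the $\varepsilon/2$ density step is legitimate precisely because $\|T^k\|\leq 1$ uniformly. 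Interestingly, your telescoping device is essentially the same Fej\'er-type summability mechanism the paper deploys later (Lemma \ref{lem:convSets} and Theorem \ref{thm:CPMCi}) to obtain the $o(k^{-1/2})$ rate for the increments in the convex-set setting; what your argument adds, and what the cited classical theorem requires, is the identification $\fix(T^*)=M$ and the resulting orthogonal splitting, which converts asymptotic regularity on $\image(I-T)$ into strong convergence of $y_k$ to $P_M(y_0)$ on all of $\mathcal H$. In short: the paper buys brevity by citation; your route buys a complete elementary proof consistent with the paper's toolkit.
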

Since then, the method of cyclic projections has been extensively studied in the literature; see, for example, \cite{BauschkeBorwein1996, Deutsch2001, Galantai2004, Cegielski2012, Popa2012}. In this paper we study the asymptotic properties of the error term
\begin{equation}\label{int:error}
  \|y_k-P_M(y_0)\|,
\end{equation}
the average distance to the individual sets
\begin{equation} \label{int:proxAVG}
  {\textstyle\sqrt{\frac 1 m \sum_{i=1}^m d^2(y_k,M_i)}}
\end{equation}
and the increment
\begin{equation} \label{int:proxAR}
  \|y_k - y_{k-1}\|
\end{equation}
along the trajectory $\{y_k\}_{k=0}^\infty$, assuming that the subspace $\sum_{i=1}^{m} M_i^\perp$ is not closed. Below we present a very brief overview of the relevant literature.

We begin with a result by Bauschke, Borwein and Lewis  \cite[Theorem 3.7.4]{BauschkeBorweinLewis1997} according to which:
\begin{theorem}\label{thm:linear}
  The subspace $\sum_{i=1}^{m} M_i^\perp$ is closed if and only if $\|T - P_M\| < 1$.
\end{theorem}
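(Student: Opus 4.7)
The plan is to first reduce the statement to the action of $T$ on $M^\perp$, and then handle the two directions separately. Since $M \subseteq M_i$ for every $i$, we have $P_{M_i}P_M = P_M P_{M_i} = P_M$, hence $TP_M = P_M T = P_M$, and consequently $T - P_M$ acts as $T$ on $M^\perp$ and as $0$ on $M$. Thus $\|T - P_M\| = \|T|_{M^\perp}\| \leq 1$, and the equivalence reduces to deciding when $T$ strictly contracts $M^\perp$.

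For the implication ``$\|T-P_M\|<1 \Rightarrow \sum M_i^\perp$ closed'', setting $c := \|T - P_M\| < 1$ makes the Neumann series $\sum_{k\geq 0} T^k$ converge in operator norm on $M^\perp$, so $(\id - T)|_{M^\perp}$ has bounded inverse with image equal to $M^\perp$. On the other hand, the telescoping identity
\[
\id - T \;=\; \sum_{j=1}^{m} P_{M_m}P_{M_{m-1}}\cdots P_{M_{j+1}}\,(\id - P_{M_j}),
\]
combined with the repeated substitution $P_{M_k}y = y - (\id - P_{M_k})y$, shows that each summand has image in $M_j^\perp + M_{j+1}^\perp + \cdots + M_m^\perp$, whence $\image(\id - T) \subseteq \sum_{j=1}^m M_j^\perp$. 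Consequently $M^\perp = \image(\id-T)|_{M^\perp} \subseteq \sum_j M_j^\perp \subseteq M^\perp$, so the sum equals $M^\perp$ and is therefore closed.

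For the converse, which I expect to be the main obstacle, I would induct on $m$. The base case $m=2$ is the classical Friedrichs angle theorem: $\|P_B P_A - P_{A\cap B}\|$ equals the cosine of the Friedrichs angle between $A$ and $B$, and this is strictly less than $1$ precisely when $A^\perp + B^\perp$ is closed. For the inductive step, set $M' := \bigcap_{i<m} M_i$ and $T' := P_{M_{m-1}}\cdots P_{M_1}$; a short preliminary argument shows that closedness of $\sum_{i=1}^m M_i^\perp$ forces closedness of both $\sum_{i<m} M_i^\perp$ (so the inductive hypothesis yields $\|T' - P_{M'}\| < 1$) and of $(M')^\perp + M_m^\perp$ (so the base case yields $\|P_{M_m}P_{M'} - P_M\| < 1$). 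To upgrade these two strict contractions into $\|T - P_M\| < 1$ via the decomposition
\[
T - P_M \;=\; P_{M_m}(T' - P_{M'}) + (P_{M_m}P_{M'} - P_M),
\]
one cannot simply use the triangle inequality; instead one must orthogonally split $M^\perp$ into its component in $M'$ and its component in $(M')^\perp$, using that the first summand annihilates $M$ and the second vanishes on $M'$. This final bookkeeping — keeping track of where each operator's image lies and exploiting the orthogonal splitting to avoid additive blow-up of norms — is the delicate step I expect to be the hardest part.
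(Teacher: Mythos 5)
The paper itself does not prove this statement; it quotes it from Bauschke--Borwein--Lewis, so your proposal has to stand on its own. The half you prove in detail is fine: the reduction $\|T-P_M\|=\|T|_{M^\perp}\|$, the Neumann-series argument giving $(I-T)(M^\perp)=M^\perp$, and the telescoping identity showing $\image(I-T)\subseteq\sum_{i=1}^m M_i^\perp$ together do yield $\sum_{i=1}^m M_i^\perp=M^\perp$, hence closedness.

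The converse direction, however, contains a genuine gap: the ``short preliminary argument'' you postulate does not exist, because closedness of $\sum_{i=1}^{m}M_i^\perp$ does \emph{not} imply closedness of $\sum_{i<m}M_i^\perp$. Concretely, take any pair $M_1,M_2$ with $M_1^\perp+M_2^\perp$ not closed and set $M_3:=\{0\}$ (or, non-degenerately, $\mathcal H=\mathcal H_1\oplus\mathcal H_2$ with the bad pair contained in $\mathcal H_1\oplus\mathcal H_2$ so that $M_1^\perp,M_2^\perp\subseteq\mathcal H_1\oplus\{0\}$, and $M_3:=\{0\}\oplus\mathcal H_2$): then $\sum_{i=1}^{3}M_i^\perp$ is closed, while $M_1^\perp+M_2^\perp$ is not. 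In such examples $\|T'-P_{M'}\|=\cos(M_1,M_2)=1$ even though $\|T-P_M\|<1$, because $P_{M_m}$ annihilates precisely the directions in which $T'$ fails to contract; so the failure is structural and cannot be repaired by a cleverer deduction of the inductive hypothesis. (Your second preliminary claim is fine: if the total sum is closed, then $\overline{\sum_{i<m}M_i^\perp}\subseteq\sum_{i\le m}M_i^\perp$, whence $(M')^\perp+M_m^\perp=\sum_{i\le m}M_i^\perp$ is closed.) In addition, the last step is only asserted: even granting $\|T'-P_{M'}\|<1$ and $\|P_{M_m}P_{M'}-P_M\|<1$, concluding $\|T-P_M\|<1$ requires a quantitative Smith--Solmon--Wagner-type estimate of the form $\|T-P_M\|^2\le 1-(1-\|T'-P_{M'}\|^2)(1-\|P_{M_m}P_{M'}-P_M\|^2)$, which you describe as ``delicate bookkeeping'' but do not carry out. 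The cited proof of Bauschke--Borwein--Lewis avoids induction on partial sums of orthocomplements altogether, working instead through (bounded linear) regularity of the family $\{M_1,\ldots,M_m\}$ and its duality with closedness of $\sum_{i=1}^m M_i^\perp$; some route of that kind, rather than your induction, is what the converse genuinely needs.
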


For historical developments concerning Theorem \ref{thm:linear} we refer the interested reader to \cite[p. 235]{Deutsch2001}. We note here briefly that for $m=2$, the subspace $M_1^\perp+M_2^\perp$ is closed $\Leftrightarrow \cos(M_1,M_2) < 1$, where  $\cos(M_1,M_2) := \sup\{\langle x_1,x_2\rangle \colon x_i \in M_i \cap (M_1\cap M_2)^\perp \text{ and } \|x_i\| \leq 1, i=1,2\}$ is the  cosine of the Friedrichs angle. In fact, for $m=2$ we have $\|T^k - P_{M}\| = \cos^{2k-1}(M_1,M_2)$; see \cite{Aronszajn1950, KayalarWeinert1988}. Interestingly enough, analogous formulas involving $\cos(M_1,M_2)$ have been established for other projection methods; see, for example, \cite{BauschkeCruzNghiaPhanWang2014, BauschkeCruzNghiaPhanWang2016, ReichZalas2017}  and \cite[Table 1]{ArtachoCampoy2019}.

Because of the inequality $\|T^k - P_M\| \leq \|T-P_M\|^k$, which holds for $m \geq 2$ (see \cite[Corollary 1]{KayalarWeinert1988}), the closedness of $\sum_{i=1}^{m}M_i^\perp$ implies linear rate of convergence for the error term \eqref{int:error}, that is,
\begin{equation}\label{int:linear:eq}
  \|y_k - P_M(y_0)\| = \mathcal O(q^k)
\end{equation}
for some $q \in (0,1)$. Moreover, the same linear rate of convergence $\mathcal O(q^k)$ holds for the average distance \eqref{int:proxAVG} and the  increment  \eqref{int:proxAR} as both of them can be bounded from above by $2\|y_{k-1}-P_M(y_0)\|$.

The question what happens with the error term \eqref{int:error} when the subspace $\sum_{i=1}^{m} M_i^\perp$ is not closed was answered much later by Bauschke, Deutsch and Hundal in \cite[Theorem 1.4]{BauschkeDeutschHundal2009} for $m=2$ and in \cite[Theorem 6.4]{DeutschHundal2010} for $m \geq 2$.

\begin{theorem}\label{thm:dichotomy}
Assume that $\sum_{i=1}^r M_i^\perp$ is not closed. Then for each $y_0 \in \mathcal H$, the sequence  $\{y_k\}_{k=1}^\infty$ converges in norm to $P_M(y_0)$, but the convergence is arbitrarily slow, that is, for any sequence $(a_k)_{k=0}^\infty$ of positive numbers converging to zero, there is $y_0 \in \mathcal H$ such that
\begin{equation}\label{}
  \|y_k-P_M(y_0)\| \geq a_k, \quad k=1,2,\ldots .
\end{equation}
\end{theorem}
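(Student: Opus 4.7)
The norm convergence $y_k \to P_M(y_0)$ is already the content of Theorem~\ref{thm:norm}, so only the arbitrary-slowness clause requires work. My plan is to run a Baire category argument in the spirit of \cite[Theorem~6.4]{DeutschHundal2010} applied to the error operators $S_k := T^k - P_M$, which are bounded linear maps $\mathcal H \to \mathcal H$ satisfying $S_k y_0 = y_k - P_M(y_0)$.

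Three facts about $\{S_k\}$ drive the argument and I would establish them first. First, by Theorem~\ref{thm:norm}, $S_k x \to 0$ pointwise on $\mathcal H$. Second, and crucially, $\|S_k\| = 1$ for every $k$: applying Theorem~\ref{thm:linear} to the $km$-tuple $(M_1,\ldots,M_m,\,M_1,\ldots,M_m,\,\ldots,\,M_1,\ldots,M_m)$, whose cyclic-projection operator is precisely $T^k$, whose intersection is still $M$, and whose sum of orthogonal complements is still the non-closed space $\sum_{i=1}^{m} M_i^\perp$, gives $\|T^k - P_M\| \geq 1$; the reverse inequality follows from $T P_M = P_M$ (iterated to $T^k P_M = P_M$), which yields $S_k = T^k(I-P_M)$ and hence $\|S_k\| \leq \|T^k\|\,\|I-P_M\| \leq 1$. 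Third, the scalar sequence $\|S_k x\|$ is non-increasing in $k$ for every fixed $x$, since $T P_M = P_M$ also yields $S_{k+1} x = T(S_k x)$ and $T$ is non-expansive.

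Given $a_k \to 0^+$, taken without loss of generality to be decreasing, I would pass to a piecewise-constant majorant $b_k \geq a_k$ with $b_k \to 0$, designed so that, by the monotonicity in the third fact, any subsequential lower bound $\|S_{k_j} x\| > b_{k_j}$ along $k_j \uparrow \infty$ yields $\|S_k x\| \geq a_k$ for every $k$ (after a final rescaling of $x$ to absorb finitely many initial indices). Then I would apply Baire's theorem to the closed sets
\[
  F_N := \{\,x \in \mathcal H : \|S_k x\| \leq b_k \text{ for all } k \geq N\,\}, \qquad N = 1, 2, \ldots
\]
If these exhausted $\mathcal H$, some $F_N$ would contain a closed ball $\overline{B}(z,r)$; combining $\|S_k(z+u)\| \leq b_k$ for $\|u\| \leq r$ and $k \geq N$ with $\|S_k z\| \to 0$ from the first fact would yield $\|S_k\| \leq C b_k / r \to 0$, contradicting the second fact. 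Some $y_0 \in \mathcal H \setminus \bigcup_N F_N$ therefore exists, and by the design of $(b_k)$ together with the monotonicity, this $y_0$ fulfills $\|y_k - P_M(y_0)\| \geq a_k$ for every $k \geq 1$.

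The main obstacle will be the combinatorial design of $(b_k)$ and the accompanying rescaling that converts the \emph{infinitely often} conclusion of Baire into the \emph{for every} $k$ conclusion required by the theorem; everything else is routine once the first two facts are in place, and the identity $\|S_k\| = 1$ is really just a repackaging of Theorem~\ref{thm:linear} applied to the iterated cycle.
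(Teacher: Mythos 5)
Your preliminary facts are fine and are established exactly in the spirit of the paper: norm convergence is Theorem~\ref{thm:norm}, the identity $\|T^k-P_M\|=1$ obtained by applying Theorem~\ref{thm:linear} to $T^k$ viewed as the cyclic product of $mk$ subspaces is legitimate (the same device appears in the paper's proof of Lemma~\ref{lem:thresholdsEps}), and the pointwise monotonicity $\|S_{k+1}x\|\le\|S_kx\|$ follows from $TP_M=P_M$ as you say. Note, however, that the paper does not prove Theorem~\ref{thm:dichotomy} at all --- it quotes it from Bauschke--Deutsch--Hundal and Deutsch--Hundal --- so the only question is whether your argument stands on its own.

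It does not, because of the step you yourself flag as ``the main obstacle'': no piecewise-constant majorant $b_k\ge a_k$ with $b_k\to 0$ can do what you ask of it. The Baire argument applied to $F_N:=\{x:\|S_kx\|\le b_k\ \forall k\ge N\}$ only produces a point $y_0$ with $\|S_ky_0\|>b_k$ along some \emph{uncontrolled} infinite set of indices. Fix $k$; monotonicity transfers the bound backwards as $\|S_ky_0\|\ge b_{k'}$ for some $k'\ge k$ in that set, and to conclude $\|S_ky_0\|\ge a_k$ you would need $b_{k'}\ge a_k$ for the particular $k'$ the argument hands you. Since the exceptional set may avoid the entire window where $b\ge a_k$ (and $b_{k'}<a_k$ for all $k'$ large, because $b\to 0$), the transferred bound can be far below $a_k$, and this failure occurs at arbitrarily large $k$, so no rescaling absorbing finitely many indices repairs it. What your scheme proves is only ``almost arbitrarily slow convergence'' (the lower bound for infinitely many $k$), a strictly weaker notion that Deutsch and Hundal explicitly distinguish from the assertion of Theorem~\ref{thm:dichotomy}. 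There is also a structural reason the category route cannot work: for a fixed null sequence $(a_k)$ the target set $\{x:\|S_kx\|\ge a_k\ \forall k\}$ is an intersection of closed sets with empty interior, hence never residual, so no genericity argument can land in it. The known proofs instead construct $y_0$ directly by a gliding-hump argument: using $\|S_{n_j}\|=1$ pick unit vectors $u_j$ with $\|S_{n_j}u_j\|$ close to $1$, choose the indices $n_j$ inductively so that pointwise convergence makes $\|S_{n_j}u_i\|$ negligible for $i<j$, and set $y_0=\sum_j c_ju_j$ with coefficients $c_j$ calibrated to $(a_k)$; monotonicity then yields the bound for every $k$. Replacing your Baire step by such a construction (or by citing the general arbitrarily-slow-convergence theorem for powers of nonexpansive operators in Deutsch--Hundal's paper) would complete the proof; as written, the central step fails.
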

The first example of two subspaces with the arbitrarily slow convergence phenomena was presented by Franchetti and Light in \cite{FranchettiLight1986}. The arbitrary slow convergence for the method of cyclic projections is also discussed in \cite{BadeaGrivauxMuller2011, DeutschHundal2011, DeutschHundal2015}. Interestingly, analogous results hold for other projection methods; see, for example, \cite{BadeaSeifert2017, BauschkeDeutschHundal2009, BorodinKopecka2020, ReichZalas2017, ReichZalas2021}. The alternative between linear and arbitrarily slow convergence is known as the dichotomy theorem.

Theorem \ref{thm:dichotomy} implies that if the subspace $\sum_{i=1}^r M_i^\perp$ is not closed, then there cannot be a polynomial upper bound $\mathcal O(k^{-p})$ for \eqref{int:error} that holds for  some $p > 0$ and  all $y_0 \in \mathcal H$. This, however, does not rule out the existence of such upper bounds for the increment \eqref{int:proxAR}. In fact:

\begin{theorem}\label{thm:ARMi}
  For each $y_0\in\mathcal H$, we have $\|y_k-y_{k-1}\| = o(k^{-1})$.
\end{theorem}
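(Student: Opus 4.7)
The plan is to combine a Fejér-type energy decrease with the key observation that the sequence of increments is itself an orbit of $T$, and then bootstrap. Set $p := P_M(y_0)$ and introduce the intermediate iterates $x_{k,0} := y_k$, $x_{k,i} := P_{M_i}(x_{k,i-1})$, so that $x_{k,m}=y_{k+1}$. First I would use firm nonexpansivity of each $P_{M_i}$ at the common fixed point $p \in M \subseteq M_i$, which yields $\|x_{k,i}-p\|^2 + \|x_{k,i-1}-x_{k,i}\|^2 \leq \|x_{k,i-1}-p\|^2$; then I telescope over $i$ and apply Cauchy-Schwarz to $y_{k+1}-y_k = \sum_{i=1}^{m}(x_{k,i}-x_{k,i-1})$ to produce the master inequality
\[
\|y_{k+1}-y_k\|^2 \le m\bigl(\|y_k-p\|^2 - \|y_{k+1}-p\|^2\bigr). \qquad (\star)
\]

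Summing $(\star)$ over $k$ gives square-summability $\sum_{k\ge 1}\|y_k-y_{k-1}\|^2 \le m\|y_0-p\|^2$, while nonexpansivity of $T=P_{M_m}\cdots P_{M_1}$ gives monotonicity $\|y_{k+1}-y_k\|\le \|y_k-y_{k-1}\|$. These two properties combined with the standard tail-comparison argument already yield $\|y_k-y_{k-1}\|=o(k^{-1/2})$. To upgrade this to $o(k^{-1})$ I would exploit the structural observation that the increment sequence $w_k := y_{k+1}-y_k$ is itself an orbit of $T$: indeed $w_k = T^k w_0$ with $w_0 = (T-I)y_0$, and $P_M w_0 = 0$ because $P_M T = P_M$. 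Applying $(\star)$ to the $w$-orbit (with fixed point $0$) and summing from $k$ to infinity yields the sharper tail bound
\[
\sum_{j\ge k}\|w_j\|^2 \le m\|y_k-p\|^2,
\]
which together with monotonicity of $\|w_k\|$ produces $\|w_k\|\le (\sqrt{2m}/\sqrt{k})\|y_{k/2}-p\|$—a factored form that already recovers $o(k^{-1/2})$. Iterating the same analysis one level up, applied now to the second-difference orbit $w_{k+1}-w_k = T^k(T-I)w_0$, gives $\|w_{k+1}-w_k\|\le (\sqrt{2m}/\sqrt{k})\|w_{k/2}\|$; chaining with the previous bound yields $\|w_{k+1}-w_k\|=O(\|y_{k/4}-p\|/k)=o(k^{-1})$ for the \emph{second} differences.

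The hard part will be the final conversion: passing from the sharp $o(k^{-1})$ estimate on the second differences $\|w_{k+1}-w_k\|$ to the same rate on the first differences $\|w_k\|$ demanded by the theorem. The naive triangle-inequality bound $\|w_k\|\le \sum_{j\ge k}\|w_{j+1}-w_j\|$ is insufficient, since terms of order $o(1/j)$ need not be summable. I would expect the resolution to combine the two tail estimates $\sum_{j\ge k}\|w_j\|^2 \le m\|y_k-p\|^2$ and $\sum_{j\ge k}\|w_{j+1}-w_j\|^2 \le m\|w_k\|^2$ simultaneously with the Fejér monotonicity of $\|y_k-p\|$, or alternatively to invoke a Hilbert-space-specific refinement (in the spirit of Katznelson--Tzafriri-type asymptotic estimates) that exploits the composition-of-projections structure of $T$ beyond what generic averaged-operator theory provides.
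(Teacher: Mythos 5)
Your calculations are correct as far as they go: the inequality $(\star)$, the square-summability, the monotonicity of the increments, the identification $w_k = T^k(T-I)y_0$ with $P_M w_0 = 0$, and the resulting estimates $\|w_k\| = o(k^{-1/2})$ and $\|w_{k+1}-w_k\| = o(k^{-1})$ are all valid. But the argument stops exactly where the theorem begins, and you say so yourself: the statement is about the \emph{first} differences, and your bootstrap can never reach it. Each application of $(\star)$ to a higher-order difference orbit gains a factor $k^{-1/2}$ for the \emph{next} difference while leaving the estimate at the current level untouched, so you get $o(k^{-j/2})$ for $j$-th differences and never better than $o(k^{-1/2})$ for the first. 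The repairs you hint at also fail: summing the tail bound $\sum_{j\ge k}\|w_j\|^2 \le m\|y_k-p\|^2$ over $k$ would require $\sum_k\|y_k-p\|^2 < \infty$, which is false in general --- when $\sum_{i=1}^m M_i^\perp$ is not closed, $\|y_k-p\|\to 0$ arbitrarily slowly (Theorem \ref{thm:dichotomy}) --- and an $o(k^{-1})$ bound on second differences is not summable, so the triangle inequality gives nothing. More fundamentally, your machinery uses only that $T$ is strongly quasi-nonexpansive with fixed point set $M$, and for that class of operators $o(k^{-1/2})$ for the increments is sharp; the upgrade to $o(k^{-1})$ is a structural fact about products of orthogonal projections, so the projection/linearity structure must enter in an essential way that the proposal never supplies.

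For comparison, the paper does not prove this step elementarily either: it complexifies $\mathcal H$ and imports the hard ingredient from the literature. One route is Badea--Seifert (Theorem 2.1 and Remark 4.2(b) of their paper), where the product of orthogonal projections is shown to be a Ritt operator, yielding $k\|\mathbf T^k\mathbf z-\mathbf T^{k-1}\mathbf z\|_{\mathbb C}\to 0$ directly. The alternative route uses Crouzeix's Lemma 5.2, which gives the weighted square-summability $\sum_{k\ge 1} k\,\|\mathbf T^k\mathbf z-\mathbf T^{k-1}\mathbf z\|_{\mathbb C}^2 < \infty$; combined with the monotonicity of $\|y_k-y_{k-1}\|$ and exactly the tail-comparison trick you use, this yields $o(k^{-1})$. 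So your closing paragraph correctly locates the missing content in a Katznelson--Tzafriri/Ritt-type estimate, but since no such estimate is actually proved or invoked, the proposal does not establish the theorem; it re-derives the $o(k^{-1/2})$ rate (the paper's Theorem \ref{thm:CPMCi}, valid for general convex sets) plus an $o(k^{-1})$ bound on second differences, and leaves the genuine gap open.
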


This result has been established by Badea and Seifert in \cite[Theorem 2.1 and Remark 4.2(b)]{BadeaSeifert2016} for the product of orthogonal projections in a complex Hilbert space. Since Theorem \ref{thm:ARMi} plays an important role in our analysis, we elaborate more on its proof for a real Hilbert space in the \hyperref[sec:Appendix]{Appendix}. We also present an alternative proof by using \cite[Lemma 5.2]{Crouzeix2008}. A similar result can be found in \cite[Proposition 2.2]{Cohen2007} for the product of conditional expectations.

 It turns out that in contrast to the arbitrarily slow convergence, we can still expect  polynomial behavior for the error term \eqref{int:error} if the starting points $y_0$ belong to a certain subspace of $\mathcal H$.  For example, in view of Theorem \ref{thm:ARMi}, one of the candidates is the subspace $M \oplus (I-T)(\mathcal H)$, which is dense in $\mathcal H$ and on which \eqref{int:error} converges with the rate $o(k^{-1})$. A more general result of Badea and Seifert \cite[Theorem 4.3]{BadeaSeifert2016} (see the \hyperref[sec:Appendix]{Appendix}) asserts that:
\begin{theorem}\label{thm:superPoly}
  If $\sum_{i=1}^r M_i^\perp$ is not closed, then for each  $y_0 \in X_p := M \oplus (I-T)^p(\mathcal H)$ (which is a dense linear subspace of $\mathcal H$), the convergence is polynomial and we have
  \begin{equation}\label{thm:superPoly:eq}
    \|y_k - P_M(y_0)\| = o(k^{-p}),
  \end{equation}
  where $p = 1,2,\ldots$. Moreover, for each $y_0 \in X := \bigcap_{p=1}^\infty X_p$ (which is also a dense linear subspace of $\mathcal H$), the convergence is super-polynomially fast as \eqref{thm:superPoly:eq} holds for all $p>0$.
\end{theorem}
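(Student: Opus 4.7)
The plan is to reduce the assertion to an operator-norm estimate on $(I-T)^p T^k$, obtain such a bound by iterating Theorem \ref{thm:ARMi} via the uniform boundedness principle and the commutativity of $T$ with $I-T$, and then lift the resulting $O(k^{-p})$ estimate to the claimed pointwise $o(k^{-p})$ rate through a density argument.

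First, for $y_0 \in X_p$, write $y_0 = m_0 + (I-T)^p w$ with $m_0 \in M$ and $w \in \mathcal H$. Since $\fix(T) = \fix(T^*) = M$, the standard identity $\overline{(I-T)(\mathcal H)} = \ker(I-T^*)^\perp = M^\perp$ yields $(I-T)^p(\mathcal H) \subseteq M^\perp$, whence $P_M(y_0) = m_0$. Using $T m_0 = m_0$ and the commutativity of $T$ with $I-T$, we compute
\begin{equation*}
  y_k - P_M(y_0) \;=\; T^k(I-T)^p w \;=\; (I-T)^p T^k w,
\end{equation*}
so the assertion reduces to showing $\|(I-T)^p T^k w\| = o(k^{-p})$ for every $w \in \mathcal H$.

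Next, Theorem \ref{thm:ARMi} asserts that $k\,\|(I-T)T^{k-1} y_0\| \to 0$ for every $y_0$, so the sequence of bounded operators $A_k := k(I-T)T^{k-1}$ is pointwise bounded, and the uniform boundedness principle gives $\sup_k \|A_k\| < \infty$, i.e., $\|(I-T)T^k\|_{\mathrm{op}} = O(k^{-1})$. Writing $k = n_1 + \cdots + n_p$ with $n_i \in \{\lfloor k/p\rfloor, \lceil k/p\rceil\}$ and using commutativity to factor
\begin{equation*}
  (I-T)^p T^k \;=\; \prod_{i=1}^p (I-T)\, T^{n_i},
\end{equation*}
one obtains $\|(I-T)^p T^k\|_{\mathrm{op}} \leq \prod_{i=1}^p \|(I-T)T^{n_i}\|_{\mathrm{op}} = O(k^{-p})$.

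The main obstacle is passing from this uniform $O(k^{-p})$ bound to the claimed pointwise $o(k^{-p})$ rate. Observing that $(I-T)^p T^k$ annihilates $M$, we may assume $w \in M^\perp$; since $\overline{(I-T)(\mathcal H)} = M^\perp$, approximate $w$ by $w_n = (I-T)v_n$ with $\|w-w_n\| \to 0$. Splitting $(I-T)^p T^k w = (I-T)^{p+1} T^k v_n + (I-T)^p T^k(w - w_n)$ and applying the previous step with exponents $p+1$ and $p$ yields
\begin{equation*}
  k^p\,\|(I-T)^p T^k w\| \;\leq\; \frac{C_{p+1}\,\|v_n\|}{k} + C_p\,\|w - w_n\|,
\end{equation*}
so $\limsup_k k^p\,\|(I-T)^p T^k w\| \leq C_p\,\|w - w_n\| \to 0$ as $n \to \infty$. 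The super-polynomial claim for $y_0 \in X$ is then immediate, since $y_0 \in X_p$ for every $p$. Density of each $X_p$ follows from the injectivity of $(I-T^*)^p$ on $M^\perp$ (a consequence of $\fix(T^*) = M$), while density of $X = \bigcap_{p\geq 1} X_p$ demands a Mittag-Leffler-type construction which I expect to carry out along the lines of \cite[Theorem~4.3]{BadeaSeifert2016}.
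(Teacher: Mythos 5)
Your argument is correct, but it takes a genuinely different route from the paper. The paper proves Theorem \ref{thm:superPoly} by complexification: it passes to $\mathcal H + i\mathcal H$, checks that the complexified subspaces $M_j+iM_j$, the operator $T$ and the spaces $X_p$ behave coordinate-wise, and then invokes \cite[Theorem 4.3]{BadeaSeifert2016} (a result in complex Hilbert space, based on Ritt operators and functional calculus) to transfer both the rate \eqref{thm:superPoly:eq} and the density of $X_p$ and $X$ back to the real setting. You instead work directly in $\mathcal H$: after the reduction $y_k-P_M(y_0)=(I-T)^pT^k w$, you bootstrap Theorem \ref{thm:ARMi} through the uniform boundedness principle to $\|(I-T)T^k\|=\mathcal O(k^{-1})$ (this is precisely the paper's Lemma \ref{lem:thresholds}(i)), factor $(I-T)^pT^k$ into $p$ commuting blocks of length about $k/p$ to get the operator-norm bound $\mathcal O(k^{-p})$, and then upgrade to the pointwise rate $o(k^{-p})$ by approximating $w\in M^\perp$ from the dense subspace $(I-T)(\mathcal H)$ and applying the exponent-$(p+1)$ bound to the approximants; each of these steps is sound. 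What your route buys is a short, essentially self-contained real-space proof of the rate, resting only on Theorem \ref{thm:ARMi}, and it shows in passing that \eqref{thm:superPoly:eq} holds on $X_p$ without the non-closedness hypothesis; what the paper's route buys is that fractional exponents $p>0$ and, above all, the density assertions come for free from \cite{BadeaSeifert2016}.

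Two loose ends in your write-up. First, the injectivity of $(I-T^*)^p$ on $M^\perp$ is not a formal consequence of $\fix(T^*)=M$ alone; you need the (standard) fact that $\ker(I-A)^p=\ker(I-A)$ for a Hilbert-space contraction $A$: if $(I-A)^2x=0$, put $y=(I-A)x\in\fix A=\fix A^*$ and compute $\|y\|^2=\langle x,(I-A^*)y\rangle=0$, then induct. With this, $\overline{(I-T)^p(\mathcal H)}=\bigl(\ker(I-T^*)^p\bigr)^\perp=M^\perp$ and the density of $X_p$ follows as you indicate. Second, you defer the density of $X=\bigcap_{p\geq 1}X_p$ to a Mittag-Leffler-type construction; this piece is genuinely not in your proposal, although the paper does not prove it independently either, importing it from \cite[Theorem 4.3]{BadeaSeifert2016} via the complexification, so it is the one remaining ingredient if you want your proof to be fully self-contained.
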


The authors commented in \cite[Remark 2.5 (c)]{BadeaSeifert2016} that the rate in \eqref{thm:superPoly:eq} is optimal in the sense that it cannot be improved for all $y_0 \in X_p$. In particular, the inclusions $X_{p+1} \subset X_p$ are strict for all $p = 1,2,\ldots$. We elaborate further on this below. It is also worth mentioning that \cite[Theorem 4.3]{BadeaSeifert2016} allows real values of $p>0$ in \eqref{thm:superPoly:eq} for which the corresponding subspaces $X_p$ are defined by using the so-called fractional powers of operators.

Subsequently, Borodin and Kopeck\'{a} \cite[Theorems 3 and 4]{BorodinKopecka2020} managed to show two polynomial error bounds when the set of starting points is restricted to the subspace $\sum_{i=1}^{m} M_i^\perp$  and when $M = \{0\}$. We slightly rephrase their result allowing $M \neq \{0\}$.

\begin{theorem}\label{thm:polyRate}
  For each $y_0 \in  Y := M \oplus  \sum_{i=1}^{m} M_i^\perp$, we have
  \begin{equation}\label{thm:polyRate:msets}
    \|y_k -  P_M(y_0)  \| = \mathcal O(k^{ - 1/(4m\sqrt m + 2)  }).
  \end{equation}
  Moreover, if the number of subspaces $m = 2$, then for each $y_0 \in   Y = (M_1\cap M_2) \oplus  M_1^\perp + M_2^\perp$, we have
  \begin{equation}\label{thm:polyRate:2sets}
    \|y_k -  P_M(y_0)  \| = \mathcal O(k^{-1/2}).
  \end{equation}
  Furthermore, the rate in \eqref{thm:polyRate:2sets} is best possible as the corresponding polynomial $k^{1/2}$ cannot be replaced by $k^{1/2+\varepsilon}$ for any $\varepsilon > 0$.
\end{theorem}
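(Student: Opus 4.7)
The plan is to prove the three parts in turn. The $m=2$ bound admits a clean spectral-theoretic reduction, the general $m$ bound (which is the main technical obstacle) requires a much more delicate iterative estimate, and the optimality claim follows from an explicit block-diagonal construction.

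For \eqref{thm:polyRate:2sets}, I would decompose $y_0 = P_M(y_0) + u_1 + u_2$ with $u_i \in M_i^\perp$. Since $T$ fixes $M$ and $Tu_1 = P_{M_2}P_{M_1}u_1 = 0$, one gets $y_k - P_M(y_0) = T^k u_2$ for every $k \geq 1$, so the task reduces to bounding $\|T^k u_2\|$. The decisive observation is that $v_k := T^k u_2 \in M_2$ for $k \geq 1$, and consequently $v_{k+1} = A v_k$ where $A := (P_{M_2}P_{M_1}P_{M_2})|_{M_2}$ is a positive self-adjoint contraction on $M_2$. I would then show that $v_1 = T u_2 \in \operatorname{range}\bigl((I-A)^{1/2}\bigr)$: on $M_2$ one has $\langle (I-A)u,u\rangle = \|(I-P_{M_1})u\|^2$, and using $u_2 \in M_2^\perp$ with $u \in M_2$, $\langle v_1, u\rangle = -\langle u_2, (I-P_{M_1})u\rangle$; Cauchy-Schwarz then gives $|\langle v_1,u\rangle|^2 \leq \|u_2\|^2 \langle (I-A)u,u\rangle$, which by Douglas' range-inclusion lemma produces $w \in M_2$ with $v_1 = (I-A)^{1/2}w$ and $\|w\| \leq \|u_2\|$. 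Applying the spectral theorem to $A$ now yields
\begin{equation*}
\|T^{k+1}u_2\|^2 = \|A^k v_1\|^2 = \int_0^1 \lambda^{2k}(1-\lambda)\,d\mu_w(\lambda) \leq \frac{\|u_2\|^2}{2k+1},
\end{equation*}
since $\max_{\lambda\in[0,1]} \lambda^{2k}(1-\lambda) \leq (2k+1)^{-1}$; \eqref{thm:polyRate:2sets} follows immediately.

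For \eqref{thm:polyRate:msets} the same decomposition produces $y_k - P_M(y_0) = \sum_{i=1}^m T^k u_i$, but for $i > 1$ the annihilation $Tu_i = 0$ fails and, more seriously, the operator $(P_{M_m} T P_{M_m})|_{M_m}$ governing the iteration after the first step is no longer self-adjoint once $m \geq 3$. This is the main obstacle of the whole theorem: the direct spectral route breaks, and one must instead combine the asymptotic regularity $\|y_k - y_{k-1}\| = o(k^{-1})$ from Theorem \ref{thm:ARMi} with quantitative control of the partial sums $S_k := \sum_{j=0}^{k-1} T^j$ via the telescoping identity $(I-T)S_k = I - T^k$, distributing the analysis across the $m$ non-annihilating components. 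The degradation of the exponent to $1/(4m\sqrt{m}+2)$ reflects the cost of iterating a Jordan-type estimate $m$ times without a self-adjoint reduction.

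Finally, for optimality I would work in $\mathcal{H} = \bigoplus_{n=1}^\infty \mathbb{R}^2$ and set $M_i := \bigoplus_n M_i^{(n)}$ with each $M_i^{(n)}$ a line in the $n$-th copy of $\mathbb{R}^2$ making angle $\theta_n$ with the other. Block-diagonality of $T$ yields $\|T^k z\|^2 = \sum_n c_n^2 \cos^{4k-2}\theta_n\,\sin^2\theta_n$ for $z = \sum_n c_n v_n$ with $v_n \in (M_2^{(n)})^\perp$ of unit norm; choosing $\theta_n \searrow 0$ and weights $c_n$ in a coordinated way (in the spirit of Franchetti-Light \cite{FranchettiLight1986} and the constructions of \cite{BadeaSeifert2016,BorodinKopecka2020}) produces $y_0 \in M_1^\perp + M_2^\perp$ whose trajectory saturates $\|T^k y_0\| \asymp k^{-1/2}$, ruling out any uniform replacement of $k^{1/2}$ by $k^{1/2+\varepsilon}$.
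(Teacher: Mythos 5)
Your treatment of the $m=2$ bound \eqref{thm:polyRate:2sets} is correct and complete: the reduction to the self-adjoint contraction $A=(P_{M_2}P_{M_1}P_{M_2})|_{M_2}$, the Douglas-type factorization $Tu_2=(I-A)^{1/2}w$ with $\|w\|\le\|u_2\|$, and the elementary bound $\lambda^{2k}(1-\lambda)\le(2k+1)^{-1}$ give exactly $\mathcal O(k^{-1/2})$. Note that the paper itself does not prove this theorem (it is quoted from Borodin--Kopeck\'a); the paper's own route is different and stronger: via Pierra's product space, Lemma \ref{lem:convSets}, Theorem \ref{thm:ARMi} and Lemma \ref{lem:thresholds}(viii) it obtains $\|y_k-P_M(y_0)\|=\mathcal O(k^{-1/2})$ on $Y$ for \emph{every} $m$ (Theorem \ref{thm:main}), which supersedes \eqref{thm:polyRate:msets}. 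Your spectral argument is a nice, more elementary alternative, but it is confined to $m=2$.

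The two remaining parts have genuine gaps. First, \eqref{thm:polyRate:msets} is not proved: you never derive the exponent $1/(4m\sqrt m+2)$ (nor any positive exponent) for $m\ge3$. The sketched combination of Theorem \ref{thm:ARMi} with the telescoping identity $(I-T)S_k=I-T^k$ controls starting points in $X_1=M\oplus(I-T)(\mathcal H)$, but $X_1$ is a \emph{proper} subspace of $Y$ when $\sum_iM_i^\perp$ is not closed (Corollary \ref{cor:Xp}), so this cannot cover all $y_0\in Y$ without a further idea; as it stands the main case of the theorem is asserted, not proved. Second, the optimality argument overclaims: there is \emph{no} $y_0\in M_1^\perp+M_2^\perp$ with $\|T^ky_0\|\asymp k^{-1/2}$. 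Indeed, your own spectral formula gives $\sum_{k\ge1}\|T^ku_2\|^2=\int_{[0,1]}\bigl(\sum_{k\ge0}\lambda^{2k}(1-\lambda)\bigr)d\mu_w(\lambda)\le\|u_2\|^2<\infty$, hence $\liminf_k k\|T^ky_0\|^2=0$; the same obstruction shows in the block sums, where forcing $\sum_nc_n^2\sin^2\theta_n\cos^{4k-2}\theta_n\gtrsim k^{-1}$ at $k\approx\sin^{-2}\theta_n$ requires $c_n^2\not\to0$, contradicting $\sum_nc_n^2<\infty$. What the theorem actually requires is weaker and is what Borodin--Kopeck\'a prove: for each $\varepsilon>0$ an $\varepsilon$-dependent $y_0$ with $\|T^ky_0\|\ge Ck^{-1/2-\varepsilon}$ (compare \eqref{int:lowerBound}), which your construction does yield after coupling the weights to $\varepsilon$ (e.g.\ $\sin^2\theta_n=2^{-n}$, $c_n^2=2^{-\varepsilon n}$). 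Alternatively, the paper derives optimality for every configuration with $\sum_iM_i^\perp$ non-closed, with no example at all, from Lemma \ref{lem:thresholdsEps} together with the uniform boundedness principle.
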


Finding the best possible power $p >0$ for the upper bound $\mathcal O(k^{-p})$ in \eqref{thm:polyRate:msets} was left as an open problem when $m \geq 3$; see \cite[Problem 3]{BorodinKopecka2020}. It is worth emphasizing that the optimality of \eqref{thm:polyRate:2sets} is shown in \cite{BorodinKopecka2020} by  using  an example of two subspaces of a separable Hilbert space $\mathcal H$ for which $M_1^\perp + M_2^\perp$ is not closed. In that particular example, for each $\varepsilon >0 $, the authors define an $\varepsilon$-dependent starting point $y_0 \in M_1^\perp + M_2^\perp$ for which the sequence $\{y_k\}_{k=0}^\infty$ satisfies the lower bound
\begin{equation}\label{int:lowerBound}
  \|y_k\| \geq C k^{-1/2-\varepsilon}, \quad k=1,2,\ldots,
\end{equation}
where $C = C(y_0) > 0$. The example extends to the case where $m \geq 3$ by simply putting $M_i:=M_2$ for $i \geq 3$. In particular, the power $p>0$ in \eqref{thm:polyRate:msets} cannot be larger than $1/2$. We return to \cite[Problem 3]{BorodinKopecka2020} and the lower bound property \eqref{int:lowerBound} below.

 Similarly to the subspaces $X_p$ considered in Theorem \ref{thm:superPoly}, the subspace $Y$ defined in Theorem \ref{thm:polyRate} is dense in $\mathcal H$. This follows, for example, from the identity $\overline{\sum_{i=1}^{m} M_i^\perp} = M^\perp$; see \cite[Theorem 4.6]{Deutsch2001}. Moreover, it was suggested in \cite[Remark 4.4(b)]{BadeaSeifert2016} that when $\sum_{i=1}^{m} M_i^\perp$ is not closed, then, in general, the inclusion $X_1 \subset Y$ is strict.

\bigskip
After this short literature overview, we may now present the contributions of our paper, which are  as follows:

\begin{enumerate}[{\bfseries (C1)} ]
  \item  We show that for all $y_0 \in \mathcal H$,  the average distance \eqref{int:proxAVG} exhibits a polynomial rate $o(k^{-1/2})$.
  \item  Moreover, we show that for all $ y_0 \in Y $, the error term \eqref{int:error}, the  average  distance \eqref{int:proxAVG} and the increment \eqref{int:proxAR}  satisfy polynomial upper bounds $\mathcal O(k^{-1/2})$, $\mathcal O(k^{-1})$ and $\mathcal O(k^{-3/2})$, respectively.
  \item Furthermore, we prove that if $\sum_{i=1}^{m}M_i^\perp$ is not closed, then  all of the above-mentioned rates, including $o(k^{-1})$  in Theorem \ref{thm:ARMi}, cannot be improved.
  \item  In addition, we verify that if $\sum_{i=1}^{m}M_i^\perp$ is not closed, then the inclusions $X_{p+1} \subset X_p \subset Y$ are indeed strict for $p=1,2,\ldots$.
  \item  Finally, we demonstrate that if $\sum_{i=1}^{m}M_i^\perp$ is not closed, then there is a dense subset of starting points $V \subset Y$ on which
      \begin{equation}\label{int:limsup}
        \limsup_{k \to \infty} k^{1/2+\varepsilon}\|y_k - P_M(y_0)\| = \infty
      \end{equation}
      for all $\varepsilon > 0$. An analogous property holds for the average distance \eqref{int:proxAVG} and for the increment \eqref{int:proxAR}.
\end{enumerate}
The first three statements  \textbf{(C1)--(C3)}  can be found in Theorem \ref{thm:main}. In particular, we fully solve \cite[Problem 3]{BorodinKopecka2020} so that the upper bound in \eqref{thm:polyRate:msets} is $\mathcal O(k^{-1/2})$. It is worth pointing out that our ``optimality argument''  for \textbf{(C3)}  significantly differs from the one used in \cite[Theorem 3]{BorodinKopecka2020} and is based on Lemma \ref{lem:thresholdsEps}. In particular, this approach  not only allowed  us to verify \textbf{(C4)}, but also led us to \textbf{(C5)}; see Corollary \ref{cor:Xp} and Theorem \ref{thm:main2}. Note here that when $M = \{0\}$, then \eqref{int:limsup} implies \eqref{int:lowerBound} on $V$ for infinitely many $k$'s; see Remark \ref{rem:lowerBounds}.

\bigskip
Our paper is organized as follows. In Section \ref{sec:Pierra} we recall basic properties of the product space formulation of Pierra, on which we  rely  heavily  throughout  the paper. In Section \ref{sec:convexSets} we develop basic inequalities that connect \eqref{int:proxAVG} and \eqref{int:proxAR}. Note that the results of Section \ref{sec:convexSets}, in particular Lemma \ref{lem:convSets}, hold for all closed and convex sets and not only for closed and linear subspaces. Section \ref{sec:subspaces} is where we present our main results. In the \hyperref[sec:Appendix]{Appendix} we present the  proofs  of Theorems \ref{thm:ARMi} and \ref{thm:superPoly}.


\section{Product Space Formulation of Pierra} \label{sec:Pierra}
We consider the product space
\begin{equation}\label{}
  \mathbf H := \underbrace{\mathcal H \times \ldots \times \mathcal H}_{m \text{ times}}
\end{equation}
equipped with the inner product $\langle \cdot, \cdot \rangle$ and the induced norm $\|\cdot\|$ given by
\begin{equation}\label{}
  \langle \mathbf x, \mathbf y\rangle := \frac 1 m \sum_{i=1}^m \langle x_i, y_i\rangle
  \quad \text{and} \quad \|\mathbf x\| := \sqrt{\frac 1 m \sum_{i=1}^{m}\|x_i\|^2},
\end{equation}
where $\mathbf x = (x_1,\ldots,x_m),\ \mathbf y = (y_1,\ldots,y_m) \in \mathbf H$. In order to distinguish subsets and operators defined in $\mathcal H$ from those defined in $\mathbf H$, we use bold font in the latter case.
Following Pierra \cite{Pierra1984}, let
\begin{equation}\label{}
  \mathbf C := M_1  \times \cdots \times  M_m \quad \text{and} \quad \mathbf D := \{\mathbf x = \underbrace{(x,\ldots,x)}_{m \text{ times}} \colon x \in \mathcal H\}.
\end{equation}
The subspace $\mathbf D$ is called the \emph{diagonal} of $\mathbf H$. In addition, we define
\begin{equation}\label{}
  \mathbf M := \underbrace{M  \times \cdots \times  M}_{m \text{ times}}
  \quad \text{and} \quad
  \mathbf M_i := \underbrace{ M_i  \times \cdots \times  M_i}_{m \text{ times}},
\end{equation}
$i = 1,\ldots,m.$
It is not difficult to see that $\mathbf H$ is a Hilbert space while all of the above-mentioned sets are closed and linear subspaces of $\mathbf H$. Moreover, one can verify that
\begin{equation}\label{eq:Cperp}
  \mathbf C^\perp = M_1^\perp  \times \cdots \times  M_m^\perp,
\end{equation}
where ``$\perp$'' stands for the orthogonal complement in both $\mathcal H$ and $\mathbf H$; see, for example, \cite[Theorem 4.6]{Deutsch2001}. Furthermore, we have the following theorem:
\begin{theorem}\label{thm:PDPC}
  Let $\mathbf x = (x_1,\ldots,x_m)$ and let $s = \frac 1 m\sum_{i=1}^{m}x_i$. Then,
  \begin{equation}\label{thm:PDPC:eq}
  P_{\mathbf C}(\mathbf x) = (P_{M_i}(x_i))_{i=1}^m, \quad
  P_{\mathbf D}(\mathbf x) = (s)_{i=1}^m
  \quad\text{and}\quad
  P_{\mathbf C \cap \mathbf D}(\mathbf x) = (P_{M}(s))_{i=1}^m.
\end{equation}
\end{theorem}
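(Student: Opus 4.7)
The plan is to verify each of the three identities by appealing to the standard characterization of the projection onto a closed subspace $\mathbf{S} \subseteq \mathbf{H}$: the point $\mathbf{p} \in \mathbf{S}$ equals $P_{\mathbf{S}}(\mathbf{x})$ if and only if $\mathbf{x} - \mathbf{p}$ is orthogonal to $\mathbf{S}$. In each case I would first check that the proposed candidate lies in the relevant subspace, and then verify the orthogonality condition against an arbitrary element of that subspace, paying attention to the factor $\tfrac{1}{m}$ in the inner product on $\mathbf{H}$.

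For $P_{\mathbf{C}}$, membership of $(P_{M_i}(x_i))_{i=1}^m$ in $\mathbf{C}$ is immediate from the product structure. Orthogonality follows from the characterization of each $P_{M_i}$:
\[
\langle \mathbf{x} - (P_{M_i}(x_i))_{i=1}^m, \mathbf{c}\rangle = \frac{1}{m}\sum_{i=1}^m \langle x_i - P_{M_i}(x_i), c_i\rangle = 0
\]
for every $\mathbf{c} = (c_1,\ldots,c_m) \in \mathbf{C}$. For $P_{\mathbf{D}}$, the constant tuple $(s)_{i=1}^m$ lies in $\mathbf{D}$ by definition, and for any $\mathbf{d} = (d,\ldots,d) \in \mathbf{D}$,
\[
\langle \mathbf{x} - (s)_{i=1}^m, \mathbf{d}\rangle = \Bigl\langle \frac{1}{m}\sum_{i=1}^m (x_i - s), d\Bigr\rangle = \langle s - s, d\rangle = 0,
\]
which settles the second identity.

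For $P_{\mathbf{C} \cap \mathbf{D}}$, I would first identify $\mathbf{C} \cap \mathbf{D} = \{(x,\ldots,x) : x \in M\}$, which follows directly from the definitions of $\mathbf{C}$, $\mathbf{D}$, and $M = \bigcap_i M_i$. Since $P_M(s) \in M$, the candidate $(P_M(s))_{i=1}^m$ lies in $\mathbf{C} \cap \mathbf{D}$. For any $\mathbf{w} = (w,\ldots,w) \in \mathbf{C} \cap \mathbf{D}$ with $w \in M$, the characterization of $P_M$ applied to $s$ yields
\[
\langle \mathbf{x} - (P_M(s))_{i=1}^m, \mathbf{w}\rangle = \frac{1}{m}\sum_{i=1}^m \langle x_i - P_M(s), w\rangle = \langle s - P_M(s), w\rangle = 0.
\]
No step here is delicate; the only bookkeeping to keep straight is the normalizing factor $\tfrac{1}{m}$ in the product-space inner product and the correct identification of $\mathbf{C} \cap \mathbf{D}$ with the diagonal copy of $M$.
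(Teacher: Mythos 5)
Your proof is correct: each of the three identities is verified by the standard characterization of the orthogonal projection onto a closed subspace (membership plus orthogonality of the residual), and all three orthogonality computations, including the handling of the normalizing factor $\tfrac 1m$ and the identification $\mathbf C \cap \mathbf D = \{(x,\ldots,x) : x \in M\}$, are carried out accurately. The paper itself does not spell out an argument but merely cites Pierra and Cegielski; what you have written is precisely the routine verification those references contain, so your proposal matches the intended proof in substance.
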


\begin{proof}
See, for example, \cite[Lemma 1.1]{Pierra1984} or \cite[Section 4.4.1]{Cegielski2012}.
\end{proof}
Analogously to the projection onto $\mathbf C$, one can obtain coordinate-wise formulas for the orthogonal projections  $P_{\mathbf C^\perp}$ and $P_{\mathbf M_j}$,  $j=1,\ldots,m$,  that is,
\begin{equation}\label{eq:PCPMperp}
  P_{\mathbf C^\perp}(\mathbf x) = (P_{M_i^\perp}(x_i))_{i=1}^m
  \quad \text{and} \quad
  P_{\mathbf M_j}(\mathbf x) = (P_{M_j}(x_i))_{i=1}^m.
\end{equation}
In particular, the coordinate-wise formulas apply to the product of orthogonal projections in $\mathbf H$ defined by
\begin{equation}\label{def:Tbold}
   \mathbf T(\mathbf x) := P_{\mathbf M_m} \ldots P_{\mathbf M_1}(\mathbf x) = (T(x_i))_{i=1}^m.
\end{equation}
Note, however, that unlike the projection $P_{\mathbf C}$, the projections $P_{\mathbf M_i}$ do commute with $P_{\mathbf D}$.
\begin{proposition} \label{thm:commutingProj}
  For each $i = 1,\ldots, m$, we have
  \begin{equation}\label{thm:commutingProj:eq1}
    P_{\mathbf M_i} P_{\mathbf D} = P_{\mathbf D}P_{\mathbf M_i}.
  \end{equation}
  In particular,
  \begin{equation}\label{thm:commutingProj:eq2}
    \mathbf T P_{\mathbf D} = P_{\mathbf D}\mathbf T.
  \end{equation}
\end{proposition}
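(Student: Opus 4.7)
The plan is to prove both identities directly by unwinding the coordinate-wise formulas already established in Theorem \ref{thm:PDPC} and \eqref{eq:PCPMperp}, and then to bootstrap \eqref{thm:commutingProj:eq1} into \eqref{thm:commutingProj:eq2} by iteration over the product factors.

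First, I would fix $i \in \{1,\ldots,m\}$ and pick an arbitrary $\mathbf{x} = (x_1,\ldots,x_m) \in \mathbf{H}$. Setting $s := \frac{1}{m}\sum_{j=1}^{m} x_j$, the diagonal projection gives $P_{\mathbf{D}}(\mathbf{x}) = (s,\ldots,s)$, and applying $P_{\mathbf{M}_i}$ coordinate-wise yields
\begin{equation*}
P_{\mathbf{M}_i}P_{\mathbf{D}}(\mathbf{x}) = (P_{M_i}(s),\ldots,P_{M_i}(s)).
\end{equation*}
Conversely, $P_{\mathbf{M}_i}(\mathbf{x}) = (P_{M_i}(x_1),\ldots,P_{M_i}(x_m))$, so
\begin{equation*}
P_{\mathbf{D}}P_{\mathbf{M}_i}(\mathbf{x}) = \bigl(s',\ldots,s'\bigr), \qquad s' = \frac{1}{m}\sum_{j=1}^{m} P_{M_i}(x_j).
\end{equation*}
The linearity of the orthogonal projection $P_{M_i}$ immediately gives $s' = P_{M_i}(s)$, which establishes \eqref{thm:commutingProj:eq1}.

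For \eqref{thm:commutingProj:eq2}, I would simply iterate: by \eqref{def:Tbold}, $\mathbf{T} = P_{\mathbf{M}_m}\cdots P_{\mathbf{M}_1}$, and so by repeated use of \eqref{thm:commutingProj:eq1} we may push $P_{\mathbf{D}}$ through each factor one at a time,
\begin{equation*}
\mathbf{T}P_{\mathbf{D}} = P_{\mathbf{M}_m}\cdots P_{\mathbf{M}_2}P_{\mathbf{M}_1}P_{\mathbf{D}} = P_{\mathbf{M}_m}\cdots P_{\mathbf{M}_2}P_{\mathbf{D}}P_{\mathbf{M}_1} = \cdots = P_{\mathbf{D}}\mathbf{T}.
\end{equation*}

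There is essentially no obstacle here; the content of the proposition is really the observation that $P_{M_i}$ commutes with the scalar averaging operation in each coordinate, which is just linearity. The only point worth underlining is that the analogous statement fails for $P_{\mathbf{C}}$ (whose coordinates involve \emph{different} projections $P_{M_i}$, so no common linear map can be pulled through the average), which is exactly the asymmetry highlighted in the sentence preceding the proposition.
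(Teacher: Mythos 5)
Your proof is correct and follows essentially the same route as the paper: compute both compositions coordinate-wise via Theorem \ref{thm:PDPC} and \eqref{eq:PCPMperp}, use linearity of $P_{M_i}$ to identify the average of projections with the projection of the average, and obtain \eqref{thm:commutingProj:eq2} by pushing $P_{\mathbf D}$ through the factors of $\mathbf T$. Your closing remark about $P_{\mathbf C}$ is also consistent with the paper's discussion.
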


\begin{proof}
  Let $\mathbf x = (x_1,\ldots,x_m)$. Then,  by \eqref{thm:PDPC:eq} and \eqref{eq:PCPMperp}, we have
  \begin{equation}\label{pr:PSF2:PDPM}
    P_{\mathbf D}P_{\mathbf M_i}(\mathbf x)
    = \textstyle \left(\frac 1 m \sum_{j=1}^m P_{M_i}(x_j)\right)_{t=1}^m
    = \textstyle \left(P_{M_i}\left(\frac 1 m \sum_{j=1}^m x_j \right)\right)_{t=1}^m
    = P_{\mathbf M_i}P_{\mathbf D}(\mathbf x).
  \end{equation}
  Equation \eqref{thm:commutingProj:eq2} follows from the definition of $\mathbf T$.
\end{proof}

We finish this section with a few simple equalities and inequalities,  which are used in the sequel.

\begin{lemma}\label{thm:normsInPS}
  For each $k = 1,2,\ldots$, we have
  \begin{equation}\label{thm:normsInPS:ARPD}
    \|(\mathbf T^k - \mathbf T^{k-1})P_{\mathbf D}\| = \|T^k - T^{k-1}\| ,
  \end{equation}
  \begin{equation}\label{thm:normsInPS:PMiT}
    \|P_{\mathbf C^\perp}\mathbf T^k P_{\mathbf D} \|
    \leq \max_{i=1,\ldots,m} \|P_{M_i^\perp} T^k\|
    \leq \sqrt m \|P_{\mathbf C^\perp}\mathbf T^k P_{\mathbf D} \|,
  \end{equation}
  \begin{equation}\label{thm:normsInPS:TPMi}
    \|\mathbf T^k P_{\mathbf D}P_{\mathbf C^\perp} \|
    \leq \max_{i=1,\ldots,m} \|T^k P_{M_i^\perp}\|
    \leq \sqrt m \|\mathbf T^k P_{\mathbf D}P_{\mathbf C^\perp} \|,
  \end{equation}
  \begin{equation}\label{thm:normsInPS:ARPDPC}
    \|(\mathbf T^k -  \mathbf T^{k-1} ) P_{\mathbf D}P_{\mathbf C^\perp}\|
    \leq \max_{i=1,\ldots,m}\|(T^k - T^{k-1}) P_{M_i^\perp}\|
    \leq \sqrt m \|(\mathbf T^k -  \mathbf T^{k-1} ) P_{\mathbf D}P_{\mathbf C^\perp}\| ,
  \end{equation}
  \begin{equation}\label{thm:normsInPS:PMiTPMj}
    \|P_{\mathbf C^\perp}\mathbf T^k P_{\mathbf D}P_{\mathbf C^\perp}\|
    \leq \max_{i,j=1,\ldots,m}\|P_{M_j^\perp}T^k P_{M_i^\perp}\|
    \leq m \|P_{\mathbf C^\perp}\mathbf T^k P_{\mathbf D}P_{\mathbf C^\perp}\|.
  \end{equation}
\end{lemma}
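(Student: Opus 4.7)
The plan is to reduce every assertion in the lemma to the coordinate-wise formulas for $P_{\mathbf C^\perp}$, $P_{\mathbf D}$ and $\mathbf T^k$ established in the preceding results, together with the commutation relation $\mathbf T P_{\mathbf D} = P_{\mathbf D}\mathbf T$ from Proposition \ref{thm:commutingProj}. Writing $\mathbf x = (x_1,\ldots,x_m)$ and $s := \frac{1}{m}\sum_{i=1}^m x_i$, one has $P_{\mathbf D}\mathbf x = (s,\ldots,s)$ with $\|s\|\le \|\mathbf x\|$ by Cauchy--Schwarz, and $\mathbf T^k P_{\mathbf D}\mathbf x = (T^k s,\ldots,T^k s)$. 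Thus, once $P_{\mathbf D}$ has been applied, all five operators act ``diagonally up to averaging'', and this single observation drives both upper and lower bounds.

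For each of the five upper bounds I would expand the image of $\mathbf x$ coordinate-wise and apply the defining identity $\|\mathbf y\|^2 = \frac{1}{m}\sum_i\|y_i\|^2$. For \eqref{thm:normsInPS:ARPD} and \eqref{thm:normsInPS:PMiT} the argument is immediate: after $P_{\mathbf D}$ the iterate already lies on the diagonal, so the product-space norm collapses to the scalar norm (in \eqref{thm:normsInPS:ARPD}) or to an average of terms of the form $\|P_{M_i^\perp}T^k s\|$ bounded by $\max_i\|P_{M_i^\perp}T^k\|\cdot\|s\|$ (in \eqref{thm:normsInPS:PMiT}). For \eqref{thm:normsInPS:TPMi}--\eqref{thm:normsInPS:PMiTPMj}, where $P_{\mathbf C^\perp}$ stands \emph{before} the averaging, I would estimate $\bigl\|\,\cdot\,\tfrac{1}{m}\sum_i P_{M_i^\perp}x_i\bigr\|$ by the triangle inequality and then bound $\frac{1}{m}\sum_i\|x_i\|$ by $\sqrt{\frac{1}{m}\sum_i\|x_i\|^2} = \|\mathbf x\|$, again via Cauchy--Schwarz.

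For the lower bounds I would use explicit test vectors. To bound the left-hand sides of \eqref{thm:normsInPS:ARPD} and \eqref{thm:normsInPS:PMiT} from below, the natural choice is $\mathbf x = (x,\ldots,x) \in \mathbf D$ with $x \in \mathcal H$ arbitrary: here $P_{\mathbf D}\mathbf x = \mathbf x$ and $\|\mathbf x\| = \|x\|$, so \eqref{thm:normsInPS:ARPD} becomes an equality, while in \eqref{thm:normsInPS:PMiT} only the $i_0$-th summand on the right is retained, costing a factor $\sqrt m$. For \eqref{thm:normsInPS:TPMi}, \eqref{thm:normsInPS:ARPDPC} and \eqref{thm:normsInPS:PMiTPMj}, where $P_{\mathbf C^\perp}$ also appears on the right, I would take a vector concentrated on a single coordinate, $\mathbf x = (0,\ldots,0,x,0,\ldots,0)$ with $x \in M_{i_0}^\perp$ of unit norm chosen so that $\|T^k x\|$ (respectively, $\|(T^k-T^{k-1})x\|$ or $\|P_{M_{j_0}^\perp}T^k x\|$) is nearly maximal. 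Then $\|\mathbf x\| = 1/\sqrt m$, $P_{\mathbf C^\perp}\mathbf x = \mathbf x$, and $P_{\mathbf D}\mathbf x = \frac{1}{m}(x,\ldots,x)$, which contributes the extra factor $1/m$; combining the two normalizations produces the constants $\sqrt m$ in \eqref{thm:normsInPS:TPMi}--\eqref{thm:normsInPS:ARPDPC} and $m$ in \eqref{thm:normsInPS:PMiTPMj}.

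There is no conceptual obstacle here — the lemma is essentially a normalization computation — so the only real task is to keep track of the two distinct scaling factors: the $1/\sqrt m$ coming from the product-space norm restricted to the diagonal, and the $1/m$ coming from averaging when $P_{\mathbf C^\perp}$ stands to the right of $P_{\mathbf D}$. Inequality \eqref{thm:normsInPS:PMiTPMj} is where these two effects compound, yielding the constant $m$ rather than $\sqrt m$; if a subtlety is going to arise, it will be in not double-counting, or in missing, one of those factors.
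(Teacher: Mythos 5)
Your proposal is correct: the upper bounds via coordinate-wise expansion plus convexity/Cauchy--Schwarz, and the lower bounds via diagonal test vectors for \eqref{thm:normsInPS:ARPD}--\eqref{thm:normsInPS:PMiT} and single-coordinate test vectors $(0,\ldots,x,\ldots,0)$ with $x\in M_{i_0}^\perp$ for the remaining three, all go through with exactly the constants claimed (the scaling $\|\mathbf x\|=\|x\|/\sqrt m$ against the output $\tfrac1m\|T^k x\|$, resp.\ $\tfrac1{m^{3/2}}\|P_{M_{j_0}^\perp}T^k x\|$, yields $\sqrt m$ in \eqref{thm:normsInPS:TPMi}--\eqref{thm:normsInPS:ARPDPC} and $m$ in \eqref{thm:normsInPS:PMiTPMj}). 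Where you differ from the paper is in \eqref{thm:normsInPS:TPMi} and \eqref{thm:normsInPS:ARPDPC}: the paper does not argue these directly, but observes that \eqref{thm:normsInPS:PMiT} (and its $\mathbf T^k-\mathbf T^{k-1}$ analogue) holds for every permutation of the projections, applies it to $\mathbf T^*=P_{\mathbf M_1}\cdots P_{\mathbf M_m}$, and then uses $\|A\|=\|A^*\|$ together with the commutation $\mathbf T P_{\mathbf D}=P_{\mathbf D}\mathbf T$ of Proposition \ref{thm:commutingProj} to convert $P_{\mathbf C^\perp}(\mathbf T^*)^kP_{\mathbf D}$ into $\mathbf T^kP_{\mathbf D}P_{\mathbf C^\perp}$; for \eqref{thm:normsInPS:PMiTPMj} it uses the same single-coordinate vectors as you (scaled by $m$). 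Your route is more elementary and self-contained, handling each inequality by a direct two-sided estimate at the cost of redoing the triangle/Cauchy--Schwarz bookkeeping when $P_{\mathbf C^\perp}$ stands to the right of $P_{\mathbf D}$; the paper's adjoint--permutation argument avoids that repetition by recycling \eqref{thm:normsInPS:PMiT} verbatim, at the cost of invoking duality and the commutation relation. Both are complete proofs, and your accounting of the two normalization factors ($1/\sqrt m$ from the diagonal, $1/m$ from the averaging) is exactly the point where the constants arise.
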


\begin{proof}
  Equality \eqref{thm:normsInPS:ARPD} can be easily obtained by a direct calculation of the corresponding norms. Indeed, we have
  \begin{align}\label{} \nonumber
    \|(\mathbf T^k - \mathbf T^{k-1})P_{\mathbf D}\|
    & = \sup\{\|(\mathbf T^k - \mathbf T^{k-1})P_{\mathbf D}(\mathbf x)\| \colon \mathbf x = (x,\ldots,x) \in \mathbf D,\ \|\mathbf x\| \leq 1\}\\
    & = \sup\{\|(T^k - T^{k-1})(x)\| \colon x \in \mathcal H,\ \| x\| \leq 1\}.
  \end{align}

  In order to show \eqref{thm:normsInPS:PMiT}, let $\mathbf x = (x_1,\ldots,x_m) \in \mathbf H$. Then, by using the convexity of $\|\cdot\|^2$, we have
  \begin{equation}\label{pr:normsInPS:PMiT:1}
    \|P_{\mathbf C^\perp} \mathbf T^k P_{\mathbf D}(\mathbf x)\|^2
    = \frac 1 m \sum_{i=1}^{m} \left\|P_{M_i^\perp}T^k\left(\frac 1 m \sum_{j=1}^{m} x_j \right) \right\|^2
    \leq \max_{i=1,\ldots,m}\|P_{M_i^\perp}T^k\|^2 \cdot \|\mathbf x\|^2.
  \end{equation}
  On the other hand, for $x\in \mathcal H$ and $\mathbf x:=(x,\ldots,x)$ (so that $\|\mathbf x\| = \|x\|$), we have
  \begin{equation}\label{pr:normsInPS:PMiT:2}
    \|P_{M_i^\perp}T^k(x)\|^2
    \leq \sum_{i=1}^{m} \|P_{M_i^\perp}T^k(x)\|^2 = m \|P_{\mathbf C^\perp} \mathbf T^k P_{\mathbf D}(\mathbf x)\|^2
    \leq m \|P_{\mathbf C^\perp} \mathbf T^k P_{\mathbf D}\|^2 \cdot \|x\|^2.
  \end{equation}
  It now suffices to take the supremum over $\|\mathbf x\| = 1$ in \eqref{pr:normsInPS:PMiT:1} and over $\|x\| = 1$ in \eqref{pr:normsInPS:PMiT:2}.

  Inequalities \eqref{thm:normsInPS:TPMi} follow from \eqref{thm:normsInPS:PMiT}. Indeed, if we change the order of projections in \eqref{thm:normsInPS:PMiT}, for example, by using a permutation $\sigma = (\sigma(1), \ldots, \sigma(m))$, then, the corresponding operators $\mathbf T_\sigma := P_{\mathbf M_{\sigma(m)}} \ldots P_{\mathbf M_{\sigma(1)}}$ and $T_\sigma := P_{M_{\sigma(m)}} \ldots P_{M_{\sigma(1)}}$ satisfy
  \begin{equation}\label{pr:normsInPS:PMiTsigma}
    \|P_{\mathbf C^\perp}\mathbf T_\sigma^k P_{\mathbf D} \|
    \leq \max_{i=1,\ldots,m} \|P_{M_i^\perp} T_\sigma^k\|
    \leq \sqrt m \|P_{\mathbf C^\perp}\mathbf T_\sigma^k P_{\mathbf D} \|.
  \end{equation}
  In particular, for the adjoints $\mathbf T^* = P_{\mathbf M_1} \ldots P_{\mathbf M_m}$ and $T^* = P_{M_1} \ldots P_{M_m}$, we get
  \begin{equation}\label{pr:normsInPS:PMiTadjoint}
    \|P_{\mathbf C^\perp}(\mathbf T^*)^k P_{\mathbf D} \|
    \leq \max_{i=1,\ldots,m} \|P_{M_i^\perp} (T^*)^k\|
    \leq \sqrt m \|P_{\mathbf C^\perp}(\mathbf T^*)^k P_{\mathbf D} \|,
  \end{equation}
  Using the equality between the norms of a bounded linear operator and its adjoint, and by Proposition \ref{thm:commutingProj}, we get
  \begin{equation}\label{pr:normsInPS:TPMi}
    \|P_{\mathbf C^\perp}(\mathbf T^*)^k P_{\mathbf D} \|
    = \|(P_{\mathbf C^\perp}(\mathbf T^*)^k P_{\mathbf D} )^*\|
    = \|\mathbf T^k P_{\mathbf D}P_{\mathbf C^\perp} \|
  \end{equation}
  and
  \begin{equation}\label{}
    \|P_{M_i^\perp} (T^*)^k\|
    = \|(P_{M_i^\perp} (T^*)^k)^*\|
    = \|T^k P_{M_i^\perp}\|,
  \end{equation}
  which, when combined with \eqref{pr:normsInPS:PMiTadjoint} proves \eqref{thm:normsInPS:TPMi}.

  We now proceed to showing \eqref{thm:normsInPS:ARPDPC}. The proof is a combination of arguments used for \eqref{thm:normsInPS:PMiT} and \eqref{thm:normsInPS:TPMi} with $\mathbf T^k$ replaced by $\mathbf T^k - \mathbf T^{k-1}$ and with $T^k$ replaced by $T^k - T^{k-1}$. Indeed, observe that by repeating the calculation from \eqref{pr:normsInPS:PMiT:1} and \eqref{pr:normsInPS:PMiT:2}, we get
  \begin{equation}\label{pr:normsInPS:ARPDPC}
    \|P_{\mathbf C^\perp}(\mathbf T^k - \mathbf T^{k-1} ) P_{\mathbf D}\|
    \leq \max_{i=1,\ldots,m}\|P_{M_i^\perp}(T^k - T^{k-1}) \|
    \leq \sqrt m \|P_{\mathbf C^\perp}(\mathbf T^k -  \mathbf T^{k-1} ) P_{\mathbf D}\|.
  \end{equation}
  Obviously, inequalities \eqref{pr:normsInPS:ARPDPC} hold true if we change the order of projections by using the operators $\mathbf T_\sigma$ and $T_\sigma$; compare with \eqref{pr:normsInPS:PMiTsigma}. In particular, \eqref{pr:normsInPS:ARPDPC} holds for the adjoints $\mathbf T^*$ and $T^*$. Knowing that
  \begin{equation}\label{}
    \|P_{\mathbf C^\perp}( (\mathbf T^*)^k - (\mathbf T^*)^{k-1} ) P_{\mathbf D}\|
    = \|(\mathbf T^k - \mathbf T^{k-1} ) P_{\mathbf D}P_{\mathbf C^\perp}\|
  \end{equation}
  and
  \begin{equation}\label{}
    \|P_{M_i^\perp}((T^*)^k - (T^*)^{k-1}) \| = \|(T^k - T^{k-1}) P_{M_i^\perp}\|,
  \end{equation}
  we arrive at \eqref{thm:normsInPS:ARPDPC}, as claimed.

  Finally, we proceed to showing inequalities \eqref{thm:normsInPS:PMiTPMj}. On the one hand, using the convexity of $\|\cdot\|^2$, for $\mathbf x = (x_1,\ldots,x_m) \in \mathbf H$, we get
  \begin{align}\label{pr:normsInPS:PMiTPMj:1} \nonumber
    \|P_{\mathbf C^\perp} \mathbf T^k P_{\mathbf D}P_{\mathbf C^\perp}(\mathbf x)\|^2
    & = \frac 1 m \sum_{i=1}^{m} \left\|P_{M_i^\perp}T^k \left(\frac 1 m \sum_{j=1}^{m} P_{M_j^\perp}(x_j)\right) \right\|^2\\ \nonumber
    & \leq \frac 1 {m^2} \sum_{i=1}^{m} \sum_{j=1}^{m} \|P_{M_i^\perp}T^k P_{M_j^\perp}(x_j)\|^2 \\
    & \leq \max_{i,j=1,\ldots,m}\|P_{M_j^\perp}T^k P_{M_i^\perp}\|^2 \cdot \|\mathbf x\|^2.
  \end{align}
  On the other hand, for each $x \in \mathcal H$ and for $\mathbf x_j := (0,\ldots,mx,\ldots,0) \in \mathbf H$, we have
  \begin{equation}\label{}
    \mathbf P_{\mathbf C^\perp} \mathbf T^k P_{\mathbf D}P_{\mathbf C^\perp}(\mathbf x_j)
    = ( P_{M_1^\perp}T^kP_{M_j^\perp}(x), \ldots,P_{M_m^\perp} T^kP_{M_j^\perp}(x))
  \end{equation}
  and
  \begin{align}\label{pr:normsInPS:PMiTPMj:2} \nonumber
    \|P_{M_i^\perp}T^kP_{M_j^\perp}(x)\|^2
    & \leq \sum_{i=1}^{m} \|P_{M_i^\perp}T^kP_{M_j^\perp}(x)\|^2 \\ \nonumber
    & = m \|\mathbf P_{\mathbf C^\perp} \mathbf T^k P_{\mathbf D}P_{\mathbf C^\perp}(\mathbf x_j) \|^2 \\
    & \leq m^2 \|\mathbf P_{\mathbf C^\perp} \mathbf T^k P_{\mathbf D}P_{\mathbf C^\perp}\|^2 \cdot \|x\|^2,
  \end{align}
  as $\|\mathbf x_j\| = \sqrt m\|x\|$. Thus, by taking the supremum over $\|\mathbf x\| = 1$ in \eqref{pr:normsInPS:PMiTPMj:1} and over $\|x\| = 1$ in \eqref{pr:normsInPS:PMiTPMj:2}, we arrive at \eqref{thm:normsInPS:PMiTPMj}.
\end{proof}

\section{Closed and Convex Subsets} \label{sec:convexSets}
Throughout this section we assume that for each $i = 1, \ldots, m$, the set $C_i$ is a closed and convex subset of $\mathcal H$, and we put $C := \bigcap_{i=1}^m C_i$. The following lemma corresponds to \cite[Lemma 8 (iii)]{BargetzReichZalas2018}.

\begin{lemma}\label{lem:convSets}
  Let the sequence $\{y_k\}_{k=0}^\infty$ be defined by the method of cyclic projections using the subsets $C_i$, that is,
  \begin{equation}\label{lem:convSets:yk}
    y_0 \in \mathcal H, \quad y_{k} := (P_{C_m} \ldots P_{C_1})^k (y_0), \quad k = 1,2,\ldots.
  \end{equation}
  Assume that the intersection $C \neq \emptyset$. Then, for each $k=1,2,\ldots,$ we have
  \begin{equation}\label{lem:convSets:ineq1}
    \frac 1 m \sum_{i=1}^{m} d(y_k, C_i)^2
    \leq \frac m 2 \|y_k - y_{k-1}\| \cdot d(y_{k-1}, C)
  \end{equation}
  while
  \begin{equation}\label{lem:convSets:ineq2}
    \max_{i=1,\ldots,m} d(y_k, C_i)^2 \leq m \|y_k - y_{k-1}\| \cdot d(y_{k-1}, C).
  \end{equation}
\end{lemma}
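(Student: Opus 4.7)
The plan is to combine firm nonexpansiveness of the individual projections with a crude distance bound supplied by the intermediate iterates of one cycle. I will set $z_0 := y_{k-1}$ and $z_i := P_{C_i}(z_{i-1})$ for $i = 1, \ldots, m$, so that $z_m = y_k$ and $z_i \in C_i$. The inclusion $z_i \in C_i$ yields $d(y_k, C_i) \leq \|y_k - z_i\|$; telescoping $y_k - z_i = \sum_{j=i+1}^{m} (z_j - z_{j-1})$ and applying the Cauchy--Schwarz inequality then gives
\[
  d(y_k, C_i)^2 \leq (m-i) \sum_{j=i+1}^{m} \|z_j - z_{j-1}\|^2.
\]

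Next, I would control the total sum of squared increments. For every $c \in C \subset C_i$, firm nonexpansiveness of $P_{C_i}$ gives $\|z_i - c\|^2 + \|z_i - z_{i-1}\|^2 \leq \|z_{i-1} - c\|^2$, so summing over $i$ telescopes to $\sum_{i=1}^{m} \|z_i - z_{i-1}\|^2 \leq \|y_{k-1} - c\|^2 - \|y_k - c\|^2$. Specialising $c := P_C(y_{k-1})$ and invoking the polarization identity
\[
  \|y_{k-1} - c\|^2 - \|y_k - c\|^2 = 2\langle y_{k-1} - y_k, y_{k-1} - c\rangle - \|y_{k-1} - y_k\|^2,
\]
together with the Cauchy--Schwarz inequality, produces an upper bound for this sum of the desired cross-term shape $\|y_k - y_{k-1}\| \cdot d(y_{k-1}, C)$, up to an absolute constant. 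Combining with the previous display and taking the maximum over $i$ would then give \eqref{lem:convSets:ineq2}, while instead summing over $i$ and swapping the order of summation, so that the weights become $\frac{(j-1)(2m-j)}{2}$, would give \eqref{lem:convSets:ineq1}.

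The main obstacle I anticipate is recovering the precise constants $\frac{m}{2}$ and $m$ appearing in the statement, since a naive cascade of two successive Cauchy--Schwarz estimates produces constants that are roughly twice too large when $m \geq 3$. I expect this sharpening to proceed by carefully retaining the negative $-\|y_{k-1} - y_k\|^2$ correction in the polarization identity, which offsets part of the Cauchy--Schwarz loss, and by exploiting the exact shape of the weights $\frac{(j-1)(2m-j)}{2}$ after the Fubini swap rather than bounding them uniformly by their maximum $\frac{m(m-1)}{2}$.
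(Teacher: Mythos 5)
Your setup is sound and reproduces half of the paper's argument: the telescoping firm-nonexpansiveness bound $\sum_{j=1}^{m}\|z_j-z_{j-1}\|^2\leq\|y_{k-1}-c\|^2-\|y_k-c\|^2$ and the polarization/Cauchy--Schwarz step giving the cross-term $2\|y_k-y_{k-1}\|\cdot d(y_{k-1},C)$ are exactly what the paper uses. The genuine gap is in the first step: bounding $d(y_k,C_i)$ only through the \emph{backward} partial chain $y_k-z_i=\sum_{j=i+1}^{m}(z_j-z_{j-1})$ forces the factor $m-i$, which is as large as $m-1$, and after averaging you can only reach the constant $m-1$ (resp.\ $2(m-1)$ for the maximum), roughly twice the claimed $m/2$ and $m$ when $m\geq 3$ --- as you yourself note. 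Neither of your proposed repairs closes this gap. Retaining the $-\|y_k-y_{k-1}\|^2$ correction is useless in the regime $\|y_k-y_{k-1}\|\ll d(y_{k-1},C)$, where $2ab-a^2\approx 2ab$; and exploiting the weight profile $\frac{(j-1)(2m-j)}{2}$ after the Fubini swap cannot help, because nothing prevents the squared increments $\|z_j-z_{j-1}\|^2$ from concentrating at $j=m$, where the weight attains its maximum $\frac{m(m-1)}{2}$, so the uniform bound is unavoidable without further information.

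The missing idea in the paper is a two-sided splitting of the indices. For $j\leq\lfloor m/2\rfloor$ one does not go backward along the $k$-th cycle at all: writing $Q_i:=P_{C_i}\cdots P_{C_1}$, one uses $d(y_k,C_j)\leq\|y_k-Q_j(y_k)\|$, i.e.\ the partial chain of the \emph{next} cycle started at $y_k$, which involves only $j$ increments and hence the small factor $j$; the resulting quantity $\sum_{i}\|Q_i(y_k)-Q_{i-1}(y_k)\|^2\leq 2\|y_{k+1}-y_k\|\cdot\|y_k-c\|$ is then pulled back to $2\|y_k-y_{k-1}\|\cdot\|y_{k-1}-c\|$ via the Fej\'er-type monotonicity $\|y_{k+1}-y_k\|\leq\|y_k-y_{k-1}\|$ and $\|y_k-c\|\leq\|y_{k-1}-c\|$. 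For $j\geq\lfloor m/2\rfloor+1$ one uses your backward chain, with factor $m-j$. The per-index factor is thus $2j$ or $2(m-j)$, each at most $m$, giving \eqref{lem:convSets:ineq2}, and its average over $j$ is at most $m/2$, giving \eqref{lem:convSets:ineq1}. (Your constants $(m-1)$ and $2(m-1)$ would still suffice for the asymptotic rates proved later in the paper, but they do not prove the lemma as stated, and for $m\geq 3$ your sketched sharpening does not rescue them.)
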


\begin{proof}
  We follow the argument from \cite[Lemma 8 (iii)]{BargetzReichZalas2018} which we adjust for a simple product of the nearest point projections.
  Put $Q_0 := I$ (the identity operator) and $Q_i := P_{C_i} \ldots P_{C_1}$, $i = 1, \ldots, m$. By using the properties of the projections $P_{C_i}$ (see \cite[Corollaries 2.2.24 and 4.5.2]{Cegielski2012}),  for each $z \in C$, we have
  \begin{equation}\label{pr:convSets:Qi1}
    \sum_{i=1}^{m} \|Q_i(y_k) - Q_{i-1}(y_k)\|^2 \leq \|y_k - z\|^2 - \|y_{k+1} - z\|^2.
  \end{equation}
  Moreover, by the Cauchy-Schwarz inequality, we get
  \begin{align}\label{pr:convSets:Qi2} \nonumber
    \|y_{k+1} - z\|^2 & = \|y_{k+1} - y_k\|^2 + \|y_k - z\|^2 + 2 \langle y_{k+1} - y_k, y_k - z\rangle \\
    &\geq \|y_{k+1} - y_k\|^2 + \|y_k - z\|^2 - 2 \|y_{k+1} - y_k\| \cdot \|y_k - z\|.
  \end{align}
  In particular, by combining \eqref{pr:convSets:Qi1} and \eqref{pr:convSets:Qi2}, we obtain
  \begin{equation}\label{pr:convSets:Qi3}
    \sum_{i=1}^{m} \|Q_i(y_k) - Q_{i-1}(y_k)\|^2 \leq 2 \|y_{k+1} - y_k\| \cdot \|y_k - z\|.
  \end{equation}
  Furthermore, since the product of projections $Q_m$ is nonexpansive and $\fix Q_m = C$, for all $k=1,2,\ldots$, we have
  \begin{equation}\label{pr:convSets:Fejer}
    \|y_{k+1} - y_k\| \leq \|y_k - y_{k-1}\| \quad \text{and} \quad \|y_k - z\| \leq \|y_{k-1} - z\|.
  \end{equation}

  Let $j \in \{1,\ldots,\lfloor \frac m 2 \rfloor \}$. Then, by the definition of the metric projection, by using the triangle and the Cauchy-Schwarz inequalities, and by combining this with \eqref{pr:convSets:Qi3}--\eqref{pr:convSets:Fejer}, we obtain
  \begin{align}\label{pr:convSets:dCj1} \nonumber
    d(y_k, C_j)^2 & = \|y_k - P_{C_j}(y_k)\|^2 \leq \|y_k - Q_j(y_k)\|^2 \\ \nonumber
      & \leq \left( \sum_{i=1}^{j} \|Q_i(y_k) - Q_{i-1}(y_k)\| \right)^2 \\ \nonumber
      & \leq j \sum_{i=1}^{j} \|Q_i(y_k) - Q_{i-1}(y_k)\|^2\\
      & \leq 2j \cdot \|y_{k} - y_{k-1}\| \cdot \|y_{k-1} - z\|.
  \end{align}
  Let now $j \in \{\lfloor \frac m 2 \rfloor + 1,\ldots, m-1\}$. By using similar arguments, we obtain
  \begin{align}\label{pr:convSets:dCj2} \nonumber
    d(y_k, C_j)^2 & = \|Q_m (y_{k-1}) - P_{C_j}(Q_m (y_{k-1}))\|^2 \leq \|Q_m (y_{k-1}) - Q_j(y_{k-1})\|^2 \\ \nonumber
      & \leq \left( \sum_{i=j+1}^{m} \|Q_i(y_{k-1}) - Q_{i-1}(y_{k-1})\| \right)^2 \\ \nonumber
      & \leq (m-j) \sum_{i=j+1}^{m} \|Q_i(y_{k-1}) - Q_{i-1}(y_{k-1})\|^2\\
      & \leq 2(m-j) \cdot \|y_{k} - y_{k-1}\| \cdot \|y_{k-1} - z\|.
  \end{align}
  By combining \eqref{pr:convSets:dCj1} and \eqref{pr:convSets:dCj2}, we arrive at
  \begin{equation}\label{}
    \frac 1 m \sum_{i=1}^{m} d(y_k, C_i)^2
    \leq s_m \|y_{k} - y_{k-1}\| \cdot \|y_{k-1} - z\|,
  \end{equation}
  where
  \begin{equation}\label{}
    s_m = \frac 2 m \left( \sum_{i=1}^{\lfloor \frac m 2 \rfloor} i + \sum^{m-1}_{i=  \lfloor \frac m 2 \rfloor + 1}(m-i)\right)
    =
    \begin{cases}
      m/2, & \text{if $m$ is even}\\
      m/2-1/(2m), & \text{if $m$ is odd}.
    \end{cases}
  \end{equation}
  This shows inequality \eqref{lem:convSets:ineq1}. Inequality \eqref{lem:convSets:ineq2} follows directly from \eqref{pr:convSets:dCj1} and \eqref{pr:convSets:dCj2}.
\end{proof}

\begin{theorem} \label{thm:CPMCi}
  Let the sequence $\{y_k\}_{k=0}^\infty$ be defined as in Lemma \ref{lem:convSets} and assume that $C \neq \emptyset$. Then, we have
  \begin{equation}\label{thm:CPMCi:rateAR}
    \|y_{k} - y_{k-1}\| = o(k^{-1/2})
  \end{equation}
  and
  \begin{equation}\label{thm:CPMCi:rateMaxAVD}
    {\textstyle\sqrt{\frac 1 m \sum_{i=1}^m d^2(y_k, C_i)}} =  o(k^{-1/4}).
  \end{equation}
\end{theorem}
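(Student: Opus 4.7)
The plan is to prove \eqref{thm:CPMCi:rateAR} first by combining the internal estimate \eqref{pr:convSets:Qi1} from the proof of Lemma~\ref{lem:convSets} with a telescoping--summability argument, and then to deduce \eqref{thm:CPMCi:rateMaxAVD} immediately from \eqref{lem:convSets:ineq1}. I would write $Q_m := P_{C_m}\ldots P_{C_1}$ so that $y_{k+1} = Q_m(y_k)$, and fix $z := P_C(y_0) \in C = \fix Q_m$. Since $Q_m$ is nonexpansive and $z$ is a fixed point, the sequence $\{\|y_k - z\|\}$ is non-increasing and bounded, while $\{\|y_{k+1} - y_k\|\}$ is also non-increasing (this is just $\|Q_m(y_k) - Q_m(y_{k-1})\| \leq \|y_k - y_{k-1}\|$).

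For \eqref{thm:CPMCi:rateAR} I would apply the Cauchy--Schwarz inequality to the telescoping identity $y_{k+1} - y_k = \sum_{i=1}^{m}(Q_i(y_k) - Q_{i-1}(y_k))$ to get $\|y_{k+1} - y_k\|^2 \leq m\sum_{i=1}^{m}\|Q_i(y_k) - Q_{i-1}(y_k)\|^2$. Combined with \eqref{pr:convSets:Qi1} this yields $\|y_{k+1} - y_k\|^2 \leq m(\|y_k - z\|^2 - \|y_{k+1} - z\|^2)$, and summing from $k = 0$ to $N-1$ collapses to $\sum_{k=0}^{N-1}\|y_{k+1} - y_k\|^2 \leq m\|y_0 - z\|^2$, so $\sum_{k=0}^{\infty}\|y_{k+1} - y_k\|^2 < \infty$. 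Setting $a_k := \|y_k - y_{k-1}\|$, the sequence $(a_k)$ is non-negative, non-increasing and square-summable, so the standard comparison $\tfrac{k}{2}a_k^2 \leq \sum_{j = \lceil k/2\rceil}^{k} a_j^2 \to 0$ delivers $a_k = o(k^{-1/2})$, which is exactly \eqref{thm:CPMCi:rateAR}.

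For \eqref{thm:CPMCi:rateMaxAVD} I would substitute into \eqref{lem:convSets:ineq1}. Fej\'er monotonicity gives $d(y_{k-1}, C) \leq \|y_{k-1} - z\| \leq \|y_0 - z\|$, and the previous step supplies $\|y_k - y_{k-1}\| = o(k^{-1/2})$, so the right-hand side of \eqref{lem:convSets:ineq1} is $\mathcal{O}(\|y_k - y_{k-1}\|) = o(k^{-1/2})$; taking square roots produces \eqref{thm:CPMCi:rateMaxAVD}. The only substantive step of the whole argument is the summability $\sum \|y_{k+1} - y_k\|^2 < \infty$, and that rests entirely on the Fej\'er-type inequality \eqref{pr:convSets:Qi1} (which is really the closed-convex-set counterpart of $Q_m$ being $\alpha$-averaged). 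Once that is in hand, everything else reduces to the monotonicity of $\|y_k - y_{k-1}\|$ and the elementary $o(k^{-1/2})$-lemma for square-summable monotone sequences.
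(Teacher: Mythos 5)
Your proposal is correct and follows essentially the same route as the paper: square-summability of the increments from the inequality $\|y_{k+1}-y_k\|^2 \leq m(\|y_k-z\|^2-\|y_{k+1}-z\|^2)$, the monotone-tail argument giving $\|y_k-y_{k-1}\| = o(k^{-1/2})$, and then \eqref{thm:CPMCi:rateMaxAVD} read off from \eqref{lem:convSets:ineq1} together with Fej\'er monotonicity. The only cosmetic difference is that you rederive the key inequality from \eqref{pr:convSets:Qi1} via Cauchy--Schwarz on the telescoping sum, whereas the paper simply cites the $(1/m)$-strong quasi-nonexpansiveness of $P_{C_m}\ldots P_{C_1}$; these are the same fact.
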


\begin{proof}
  Let $z \in C$. Knowing that $P_{C_m} \ldots P_{C_1}$ is $(1/m)$-strongly quasi-nonexpansive, we have
  \begin{equation}\label{}
    \|y_{k+1} - y_k\|^2 \leq m(\|y_k - z\|^2 - \|y_{k+1} - z\|^2).
  \end{equation}
  see, for example,  \cite[Corollary 4.5.3]{Cegielski2012}.  Consequently,
  \begin{equation}\label{}
    \sum_{k=1}^{\infty} \|y_{k} - y_{k-1}\|^2 \leq m \|y_0 - z\|^2  < \infty.
  \end{equation}
   In particular, $\sum_{n=k}^\infty \|y_n - y_{n-1}\|^2 \to 0$ as $k \to \infty$.  By \eqref{pr:convSets:Fejer}, we have
  \begin{equation}\label{}
    \frac k 2  \|y_{k} - y_{k-1}\|^2
     \leq \left\lceil \frac k 2 \right\rceil \|y_{k} - y_{k-1}\|^2
    \leq \sum_{ {n= \lfloor k/2\rfloor+1} }^{k} \|y_{n} - y_{n-1}\|^2 \to 0
  \end{equation}
  as $k \to \infty$. This proves \eqref{thm:CPMCi:rateAR}. The rate of \eqref{thm:CPMCi:rateMaxAVD} follows immediately from Lemma \ref{lem:convSets}.
\end{proof}

\section{Closed and Linear Subspaces} \label{sec:subspaces}
In this section we oftentimes use the  product space  notation introduced in Section \ref{sec:Pierra}. The following result is a direct consequence of  Theorem \ref{thm:ARMi}, Lemma \ref{thm:normsInPS} and Lemma \ref{lem:convSets}.

\begin{lemma} \label{lem:thresholds}
  For the operators $T$ defined in \eqref{int:T} and $\mathbf T$ defined in \eqref{def:Tbold}, we have:
  \begin{multicols}{2}
  \begin{enumerate}[(i)]
    \item $\|T^k - T^{k-1}\| = \mathcal O(k^{-1})$;
    \item $\max\limits_{i=1,\ldots,m} \|P_{M_i^\perp}T^k\| = \mathcal O(k^{-1/2})$;
    \item $\max\limits_{i=1,\ldots,m} \|T^k P_{M_i^\perp}\| = \mathcal O(k^{-1/2})$;
    \item $\max\limits_{i=1,\ldots,m} \|(T^k-T^{k-1}) P_{M_i^\perp}\| = \mathcal O(k^{-3/2})$;
    \item $\max\limits_{i,j=1,\ldots,m} \|P_{M_i^\perp} T^k P_{M_j^\perp}\| = \mathcal O(k^{-1})$;

    \item $\|(\mathbf T^k - \mathbf T^{k-1})P_{\mathbf D}\| = \mathcal O(k^{-1})$;
    \item $\|P_{\mathbf C^\perp} \mathbf T^k P_{\mathbf D}\| = \mathcal O(k^{-1/2})$;
    \item $\|\mathbf T^k P_{\mathbf D} P_{\mathbf C^\perp} \| = \mathcal O(k^{-1/2})$;
    \item $\|(\mathbf T^{k} - \mathbf T^{k-1})P_{\mathbf D}P_{\mathbf C^\perp}\| = \mathcal O(k^{-3/2})$;
    \item $\|P_{\mathbf C^\perp}\mathbf T^k P_{\mathbf D} P_{\mathbf C^\perp}\| = \mathcal O(k^{-1})$.
  \end{enumerate}
  \end{multicols}
\end{lemma}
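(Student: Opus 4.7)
The plan is to prove the five statements (i)--(v) about $T$ acting on $\mathcal H$ first, and then read off the product-space statements (vi)--(x) from the corresponding norm (in)equalities in Lemma \ref{thm:normsInPS}. The starting point is (i): although Theorem \ref{thm:ARMi} is phrased as a pointwise $o(k^{-1})$ rate, the arguments sketched in the Appendix (both the Badea--Seifert argument and the one via Crouzeix's lemma) establish the stronger uniform operator-norm bound $\|T^k - T^{k-1}\| = \mathcal O(k^{-1})$, which is exactly (i). The equality \eqref{thm:normsInPS:ARPD} immediately converts (i) into (vi).

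Next I would deduce (ii) from (i) using inequality \eqref{lem:convSets:ineq2} of Lemma \ref{lem:convSets} with $C_i = M_i$ and $C = M$ applied to $y_k = T^k y_0$. The key auxiliary fact I would record is that $T^* = P_{M_1}\cdots P_{M_m}$ fixes every $x \in M$, so $M^\perp$ is $T$-invariant and therefore
\[
d(T^{k-1} y_0, M) = \|T^{k-1}(y_0 - P_M y_0)\| \leq \|y_0 - P_M y_0\| \leq \|y_0\|.
\]
Together with $\|y_k - y_{k-1}\| \leq \|T^k - T^{k-1}\|\,\|y_0\| = \mathcal O(k^{-1})\|y_0\|$, Lemma \ref{lem:convSets} gives $\|P_{M_i^\perp} T^k y_0\|^2 = \mathcal O(k^{-1})\|y_0\|^2$ for every $i$, which, after passing to the supremum, yields (ii); inequalities \eqref{thm:normsInPS:PMiT} then convert (ii) into (vii). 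Assertion (iii) follows by applying (ii) to the reversed cyclic product $T^* = P_{M_1}\cdots P_{M_m}$ (a product of orthogonal projections with the same intersection $M$, to which (i) and the same argument apply verbatim) and taking adjoints; then \eqref{thm:normsInPS:TPMi} converts (iii) into (viii).

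The mixed estimates (iv) and (v) are obtained by exploiting the semigroup identity $T^k = T^{\lceil k/2\rceil} T^{\lfloor k/2\rfloor}$. For (v) I would simply split
\[
\|P_{M_j^\perp} T^k P_{M_i^\perp}\| \leq \|P_{M_j^\perp} T^{\lceil k/2\rceil}\|\cdot\|T^{\lfloor k/2\rfloor} P_{M_i^\perp}\| = \mathcal O(k^{-1/2})\cdot \mathcal O(k^{-1/2}) = \mathcal O(k^{-1})
\]
by (ii) and (iii), and \eqref{thm:normsInPS:PMiTPMj} turns this into (x). For (iv) I would use the analogous identity $T^k - T^{k-1} = (T^{\lceil k/2\rceil} - T^{\lceil k/2\rceil - 1})T^{\lfloor k/2\rfloor}$ and combine (i) with (iii) to get $\mathcal O(k^{-1})\cdot \mathcal O(k^{-1/2}) = \mathcal O(k^{-3/2})$; then \eqref{thm:normsInPS:ARPDPC} delivers (ix).

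The only genuine obstacle is the passage from the pointwise little-$o$ in Theorem \ref{thm:ARMi} to the uniform big-$O$ in (i); once that is in hand, everything else is a systematic combination of Lemma \ref{lem:convSets} with telescoping splittings of $T^k$ and with the product-space equivalences of Lemma \ref{thm:normsInPS}.
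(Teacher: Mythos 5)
Your proposal is correct and follows essentially the same route as the paper: the operator-norm bound (i), the square-root gain coming from Lemma \ref{lem:convSets}, the adjoint/permutation trick for the reversed product, the splitting $T^k=T^{\lceil k/2\rceil}T^{\lfloor k/2\rfloor}$, and Lemma \ref{thm:normsInPS} to pass between $\mathcal H$ and $\mathbf H$ (the paper merely works in the product space first, proving (vii)--(x) and reading off (ii)--(v), and it obtains (x)/(v) by a second application of Lemma \ref{lem:convSets}, namely $\|P_{\mathbf C^\perp}\mathbf T^kP_{\mathbf D}P_{\mathbf C^\perp}\|\le\sqrt{\tfrac m2\|(\mathbf T^k-\mathbf T^{k-1})P_{\mathbf D}P_{\mathbf C^\perp}\|\cdot\|\mathbf T^{k-1}P_{\mathbf D}P_{\mathbf C^\perp}\|}$, rather than by your slightly simpler product of (ii) and (iii)). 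The one step you leave as an assertion --- upgrading the pointwise $o(k^{-1})$ of Theorem \ref{thm:ARMi} to the uniform bound (i) --- is closed in the paper without inspecting the Appendix arguments, simply by applying the uniform boundedness principle to the family $\{k(T^k-T^{k-1})\colon k=1,2,\ldots\}$, which is the cleanest way to fill the obstacle you yourself flag.
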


\begin{proof}
   Statement \emph{(i)} follows from Theorem \ref{thm:ARMi} and the uniform boundedness principle \cite[Theorem 2.2]{Brezis2011}.   In view of Lemma \ref{thm:normsInPS}, it suffices to show statements \emph{(vii)}--\emph{(x)}.

  \bigskip
  \emph{(vii).} We show that
  \begin{equation}\label{pr:thresholds:vii:ToShow}
    \|P_{\mathbf C^\perp} \mathbf T^k P_{\mathbf D}\|
    \leq \sqrt{\frac m 2 \|(\mathbf T^k - \mathbf T^{k-1})P_{\mathbf D}\|}.
  \end{equation}
  Let $\mathbf x = (x_1,\ldots,x_m) \in \mathbf H$. Moreover, let $\{y_k\}_{k=0}^\infty$ be defined by the method of cyclic projections \eqref{int:yk} with $y_0 := \frac 1 m \sum_{i=1}^{m}x_i$. Then, using the coordinate-wise projection formulas from Section \ref{sec:Pierra} and Lemma \ref{lem:convSets} (see \eqref{lem:convSets:ineq1}), we obtain
  \begin{align}\label{pr:thresholds:vii:avgDist} \nonumber
    \|P_{\mathbf C^\perp} \mathbf T^k P_{\mathbf D}(\mathbf x)\|^2
    & = \frac 1 m \sum_{i=1}^{m} \|P_{M_i^\perp}(y_k)\|^2
    = \frac 1 m \sum_{i=1}^{m} d^2(y_k, M_i)\\ \nonumber
    & \leq \frac m 2 \|y_k - y_{k-1}\| \cdot \|y_{k-1}\| \\ \nonumber
    & = \frac m 2 \|(\mathbf T^k - \mathbf T^{k-1})P_{\mathbf D}(\mathbf x)\| \cdot \|\mathbf T^{k-1}P_{\mathbf D}(\mathbf x)\| \\
    & \leq \frac m 2 \|(\mathbf T^k - \mathbf T^{k-1})P_{\mathbf D}\| \cdot \|\mathbf x\|^2.
  \end{align}

  \bigskip
  \emph{(viii).} We show that
  \begin{equation}\label{pr:thresholds:viii:ToShow}
    \|\mathbf T^k P_{\mathbf D} P_{\mathbf C^\perp} \| \leq \sqrt{\frac m 2 \|(\mathbf T^k - \mathbf T^{k-1})P_{\mathbf D}\|},
  \end{equation}
  where we use an argument similar to the one used in the proof of \eqref{thm:normsInPS:TPMi}. Indeed, observe that if we change the order of projections in \eqref{pr:thresholds:vii:ToShow} by using a permutation $\sigma = (\sigma(1), \ldots, \sigma(m))$, then the operator $\mathbf T_\sigma := P_{\mathbf M_{\sigma(m)}} \ldots P_{\mathbf M_{\sigma(1)}}$ satisfies
  \begin{equation}
    \|P_{\mathbf C^\perp} \mathbf T_\sigma^k P_{\mathbf D}\|
    \leq \sqrt{\frac m 2 \|(\mathbf T_\sigma^k - \mathbf T_\sigma^{k-1})P_{\mathbf D}\|}.
  \end{equation}
  In particular, for the adjoint $\mathbf T^*$, we get
  \begin{equation}\label{pr:thresholds:viii:1}
    \|P_{\mathbf C^\perp}  (\mathbf T^*)^k P_{\mathbf D}\|
    \leq \sqrt{\frac m 2 \|(\mathbf T^*)^k-(\mathbf T^*)^{k-1}) P_{\mathbf D}\|}.
  \end{equation}
  Note that in view of Proposition \ref{thm:commutingProj}, the projection $P_{\mathbf D}$ commutes with the operator $\mathbf T$. Moreover, using the equality between the norms of a bounded linear operator and its adjoint, we get
  \begin{equation}\label{pr:thresholds:viii:2}
    \|\mathbf T^k P_{\mathbf D} P_{\mathbf C^\perp} \|
    = \|(\mathbf T^k P_{\mathbf D} P_{\mathbf C^\perp} )^*\|
    = \|P_{\mathbf C^\perp}  (\mathbf T^*)^k P_{\mathbf D}\|
  \end{equation}
  and
  \begin{equation}\label{}
    \|(\mathbf T^*)^k-(\mathbf T^*)^{k-1}) P_{\mathbf D}\|
    = \|(\mathbf T^k - \mathbf T^{k-1})P_{\mathbf D}\|.
  \end{equation}
  This proves \emph{(viii)}.

  \bigskip
  \emph{(ix).} Since $P_{\mathbf D}$ is idempotent and commutes with $\mathbf T$, we have
  \begin{align}\label{pr:thresholds:ix} \nonumber
    \|(\mathbf T^k - \mathbf T^{k-1}) P_{\mathbf D} P_{\mathbf C^\perp}\|
    & = \|(( \mathbf T^{\lfloor k/2 \rfloor} - \mathbf T^{\lfloor k/2 \rfloor-1})P_{\mathbf D})
    \  (\mathbf T^{\lceil k/2 \rceil-1} P_{\mathbf D} P_{\mathbf C^\perp})\| \\
    & \leq \|( \mathbf T^{\lfloor k/2 \rfloor} - \mathbf T^{\lfloor k/2 \rfloor-1})P_{\mathbf D}\|
    \cdot  \|(\mathbf T^{\lceil k/2 \rceil-1} P_{\mathbf D} P_{\mathbf C^\perp})\|.
  \end{align}
  By combining this with \emph{(vi)} and \emph{(viii)} we arrive at \emph{(ix)}.

  \bigskip
  \emph{(x).} We show that
  \begin{equation}\label{pr:thresholds:x:ToShow}
    \|P_{\mathbf C^\perp} \mathbf T^k P_{\mathbf D} P_{\mathbf C^\perp}\|
    \leq \sqrt{\frac m 2 \|(\mathbf T^k - \mathbf T^{k-1})P_{\mathbf D}P_{\mathbf C^\perp}\| \cdot \|\mathbf T^kP_{\mathbf D}P_{\mathbf C^\perp}\| }.
  \end{equation}
  We slightly adjust the argument from the proof of case \emph{(vii)}. Indeed , let $\mathbf x = (x_1,\ldots,x_m) \in \mathbf H$. Moreover, let $\{y_k\}_{k=0}^\infty$ be defined by the method of cyclic projections \eqref{int:yk}, but this time with $y_0 := \frac 1 m \sum_{i=1}^{m} P_{M_i^\perp}x_i$. Then, we obtain
  \begin{align}\label{pr:thresholds:vii:avgDist} \nonumber
    \|P_{\mathbf C^\perp} \mathbf T^k P_{\mathbf D}P_{\mathbf C^\perp}(\mathbf x)\|^2
    & = \frac 1 m \sum_{i=1}^{m} \|P_{M_i^\perp}(y_k)\|^2
    = \frac 1 m \sum_{i=1}^{m} d^2(y_k, M_i)\\ \nonumber
    & \leq \frac m 2 \|y_k - y_{k-1}\| \cdot \|y_{k-1}\| \\ \nonumber
    & = \frac m 2 \|(\mathbf T^k - \mathbf T^{k-1})P_{\mathbf D}P_{\mathbf C^\perp}(\mathbf x)\| \cdot \|\mathbf T^{k-1}P_{\mathbf D}P_{\mathbf C^\perp}(\mathbf x)\| \\
    & \leq \frac m 2 \|(\mathbf T^k - \mathbf T^{k-1})P_{\mathbf D}P_{\mathbf C^\perp}\| \cdot \|\mathbf T^{k-1}P_{\mathbf D}P_{\mathbf C^\perp}\| \cdot \|\mathbf x\|^2,
  \end{align}
  which shows \eqref{pr:thresholds:x:ToShow}. This, in view of \emph{(viii)} and \emph{(ix)}, completes the proof.
\end{proof}

In our next result we show that the thresholds established in Lemma \ref{lem:thresholds} are critical as they distinguish polynomial from linear rates of convergence.

\begin{lemma} \label{lem:thresholdsEps}
  Let $\varepsilon > 0$ and assume that  for the operators $T$ defined in \eqref{int:T} and $\mathbf T$ defined in \eqref{def:Tbold}, one of the following conditions holds:
  \begin{multicols}{2}
  \begin{enumerate}[(i)]
    \item $\|T^k - T^{k-1}\| = \mathcal O(k^{-1-\varepsilon})$;
    \item $\max\limits_{i=1,\ldots,m} \|P_{M_i^\perp}T^k\| = \mathcal O(k^{-1/2-\varepsilon})$;
    \item $\max\limits_{i=1,\ldots,m} \|T^k P_{M_i^\perp}\| = \mathcal O(k^{-1/2-\varepsilon})$;
    \item $\nolinebreak{\max\limits_{i=1,\ldots,m} \|(T^k-T^{k-1}) P_{M_i^\perp}\| = \mathcal O(k^{-3/2-\varepsilon})}$;
    \item $\max\limits_{i,j=1,\ldots,m} \|P_{M_i^\perp} T^k P_{M_j^\perp}\| = \mathcal O(k^{-1-\varepsilon})$;

    \item $\|(\mathbf T^k - \mathbf T^{k-1})P_{\mathbf D}\| = \mathcal O(k^{-1-\varepsilon})$;
    \item $\|P_{\mathbf C^\perp} \mathbf T^k P_{\mathbf D}\| = \mathcal O(k^{-1/2-\varepsilon})$;
    \item $\|\mathbf T^k P_{\mathbf D} P_{\mathbf C^\perp} \| = \mathcal O(k^{-1/2-\varepsilon})$;
    \item ${\|(\mathbf T^{k} - \mathbf T^{k-1})P_{\mathbf D}P_{\mathbf C^\perp}\| = \mathcal O(k^{-3/2-\varepsilon})}$;
    \item $\|P_{\mathbf C^\perp}\mathbf T^k P_{\mathbf D} P_{\mathbf C^\perp}\| = \mathcal O(k^{-1-\varepsilon})$.
  \end{enumerate}
  \end{multicols}
Then $\sum_{i=1}^{m} M_i^\perp$ is closed (equivalently, $\|T - P_M\| < 1$). In particular, all of the above-mentioned rates are linear and take the form $\mathcal O(q^k)$ for some $q \in (0,1)$.
\end{lemma}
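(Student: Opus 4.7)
My plan is to reduce each of conditions (i)--(x), with the improved exponent $\varepsilon>0$, to the single statement that $\|T^k - P_M\|\to 0$ in operator norm. Once this is in hand, the contrapositive of Theorem~\ref{thm:dichotomy} (arbitrary slow convergence) yields closedness of $\sum_{i=1}^m M_i^\perp$, after which Theorem~\ref{thm:linear} gives $\|T-P_M\|<1$ and the uniform linear rate $\mathcal{O}(q^k)$. As a first reduction I observe that, by Lemma~\ref{thm:normsInPS}, each of the product-space conditions (vi)--(x) is equivalent, up to constants depending only on $m$, to the corresponding $\mathcal H$-condition among (i)--(v), so I need only treat (i)--(v).

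Case (i) is immediate: $\|T^k - T^{k-1}\| = \mathcal{O}(k^{-1-\varepsilon})$ is absolutely summable, so $(T^k)$ is Cauchy in operator norm; since its pointwise limit is $P_M$ by Theorem~\ref{thm:norm}, it follows that $\|T^k-P_M\|\to 0$. For the remaining cases I would use the identities
\[
T^{k+1}-T^k \;=\; -\sum_{j=1}^m T^k P_{M_j^\perp}Q_{j-1} \;=\; -\sum_{j=1}^m P_{M_j^\perp}T_j^k Q_{j-1},
\]
where $Q_{j-1} = P_{M_{j-1}}\cdots P_{M_1}$ and $T_j = Q_{j-1}(P_{M_m}\cdots P_{M_j})$ is the cyclic shift of $T$ (satisfying $Q_{j-1}T^k = T_j^k Q_{j-1}$). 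Each $T_j$ is again a product of orthogonal projections onto the same collection $\{M_i\}$, so the hypotheses (i)--(v) apply symmetrically. Combining the resulting estimate $\|T^{k+1}-T^k\|\leq m\max_j\|T^k P_{M_j^\perp}\|$ (respectively $\leq m\max_j\|P_{M_j^\perp}T_j^k\|$) with the baseline $\mathcal{O}(k^{-1})$ from Lemma~\ref{lem:thresholds}(i) via a product-space factorization in the spirit of the proof of Lemma~\ref{lem:thresholds}(ix) --- namely splitting $T^{k+1}-T^k$ through the halfway iterate $\lceil k/2\rceil$ and transferring the projection factor to only one of the halves --- is expected to yield $\|T^{k+1}-T^k\| = \mathcal{O}(k^{-1-\varepsilon'})$ for some $\varepsilon'>0$, which reduces the argument to case (i).

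With $\|T^k-P_M\|\to 0$ in hand, the contradiction proceeds as follows. Suppose $\sum_{i=1}^m M_i^\perp$ is not closed. Applying Theorem~\ref{thm:dichotomy} to the null sequence $a_k := \sqrt{\|T^k-P_M\|}$ produces a starting point $y_0\in\mathcal H$ with $\|y_k - P_M(y_0)\|\geq a_k$ for all $k$. But $\|y_k - P_M(y_0)\| = \|(T^k-P_M)y_0\| \leq \|T^k-P_M\|\,\|y_0\| = a_k^2\|y_0\|$, forcing $a_k\|y_0\|\geq 1$ uniformly in $k$, which contradicts $a_k\to 0$. Hence $\sum_{i=1}^m M_i^\perp$ is closed, and Theorem~\ref{thm:linear} gives $q:=\|T-P_M\|<1$. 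The uniform linear rate $\mathcal{O}(q^k)$ for all quantities in (i)--(x) then follows from $T^k-P_M = (T-P_M)^k$, the bounds $\|P_{M_i^\perp}T^k\| = \|P_{M_i^\perp}(T^k-P_M)\|\leq q^k$ and $\|T^k P_{M_i^\perp}\|\leq q^k$, and Lemma~\ref{thm:normsInPS}.

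The hard part will be the interpolation/factorization step for cases (ii)--(v) when $\varepsilon\leq 1/2$: the naive bound $\|T^{k+1}-T^k\| = \mathcal{O}(k^{-1/2-\varepsilon})$ obtained directly from the identity is not absolutely summable in that regime, so one must combine the improved polynomial decay of the projection-factor with the Badea--Seifert rate of Theorem~\ref{thm:ARMi} in a careful product-space factorization (paralleling the proof of Lemma~\ref{lem:thresholds}(ix) and (x)) in order to push the exponent strictly past~$1$.
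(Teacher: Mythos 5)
Your reduction of (vi)--(x) to (i)--(v) via Lemma \ref{thm:normsInPS}, your treatment of case (i) (summability $\Rightarrow$ norm-Cauchy $\Rightarrow$ $\|T^k-P_M\|\to 0$), and your endgame via the contrapositive of Theorem \ref{thm:dichotomy} are all sound; the endgame is in fact a legitimate alternative to the paper's route, which instead extracts $\|T^N-P_M\|\le q<1$ directly from the summable tail and applies Theorem \ref{thm:linear} to $T^N$ viewed as a product of $mN$ projections. The genuine gap is exactly where you write ``is expected to yield'' and ``the hard part will be'': the deduction of (i) from (ii), (iii), (iv) or (v) is the core of the lemma, and the mechanism you sketch cannot deliver it. Splitting $T^{k}-T^{k-1}$ through the halfway iterate leaves one factor carrying no $P_{M_j^\perp}$, and that factor does not decay at all: if $\sum_{i=1}^m M_i^\perp$ is not closed, then $\|T^n-P_M\|=1$ for every $n$ (apply Theorem \ref{thm:linear} to $T^n$), so there is nothing to ``transfer'' to the projectionless half. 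Hence your identity $T^{k+1}-T^k=-\sum_j T^kP_{M_j^\perp}Q_{j-1}$ yields only $\mathcal O(k^{-1/2-\varepsilon})$ under (ii)/(iii), which is not summable when $\varepsilon\le 1/2$, and no multiplicative factorization of the type used in Lemma \ref{lem:thresholds}(ix)--(x) applies here, because those factorizations rely on a $P_{\mathbf C^\perp}$ surviving on one side of the split.

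The missing idea in the paper is quadratic rather than multiplicative. With $n=\lfloor (k-1)/3\rfloor$ one bounds $\|T^k-T^{k-1}\|^2\le\|(I-T)T^{3n}x\|^2$ and rewrites this square as the inner product $\langle (I-T)T^{3n}x,\ T^{2n}(I-T)T^{n}x\rangle$; substituting $I-T=\sum_i P_{M_i^\perp}Q_{i-1}$ into \emph{both} slots and moving one idempotent, self-adjoint $P_{M_i^\perp}$ across, every term is flanked by $P_{M_i^\perp}Q_{i-1}T^{3n}x$ and $P_{M_j^\perp}Q_{j-1}T^{n}x$, each of norm $\mathcal O(n^{-1/2})$ by Lemma \ref{lem:thresholds}(ii) applied to cyclic reorderings of the projections, with the middle factor $P_{M_i^\perp}T^{2n}P_{M_j^\perp}$. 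This gives the key estimate \eqref{pr:thresholdsEps:ineq1}, namely $\|T^k-T^{k-1}\|^2\le \tfrac Cn\max_{i,j}\|P_{M_i^\perp}T^{2n}P_{M_j^\perp}\|$; under (v), or under (ii)/(iii) combined with the unconditional $\mathcal O(n^{-1/2})$ bounds of Lemma \ref{lem:thresholds}(ii)--(iii), the right-hand side is $\mathcal O(n^{-2-\varepsilon})$, so $\|T^k-T^{k-1}\|=\mathcal O(k^{-1-\varepsilon/2})$, which is your case (i). Case (iv) also requires its own reduction, which your sketch does not address: the paper passes (iv) $\Rightarrow$ (ix) $\Rightarrow$ (x) $\Rightarrow$ (v) using the square-root inequality \eqref{pr:thresholds:x:ToShow} together with the baseline of Lemma \ref{lem:thresholds}(viii). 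Until an estimate of this quadratic type is supplied, the central step of your argument is missing.
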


\begin{proof}
   The road map of the proof is to show the following implications:
  \begin{equation}\label{}
    (i) \Rightarrow
    \textstyle \sum_{i=1}^{m}M_i^\perp \text{ is closed}, \quad
    \{(ii), (iii), (v)\} \Rightarrow (i) \quad \text{and} \quad
    (iv) \Rightarrow (v).
  \end{equation}
  The equivalences $(i)\Leftrightarrow (vi)$, $(ii)\Leftrightarrow (vii)$, $(iii)\Leftrightarrow (viii)$, $(iv)\Leftrightarrow (ix)$ and $(v)\Leftrightarrow (x)$ follow from Lemma \ref{thm:normsInPS}.

  \bigskip
  ``$(i) \Rightarrow \textstyle \sum_{i=1}^{m}M_i^\perp$ \emph{is closed.}'' Assume that $\|T^k - T^{k-1}\| = \mathcal O(k^{-1-\varepsilon})$ for some $\varepsilon >0$. Then there are $N \geq 1$ and $C >0$, such that
  \begin{equation}\label{}
    q := \sum_{n=N}^{\infty} \|T^n-T^{n-1}\| \leq C \sum_{n=N}^{\infty}\frac{1}{n^{1+\varepsilon}} < 1.
  \end{equation}
  In particular, by using the triangle inequality, for each $x \in \mathcal H$, $\|x\| = 1$, and for all $k = 1,2,\ldots,$ we have
  \begin{equation}\label{pr:betaEquiv:ineq}
    \|T^N(x) - T^{N+k}(x)\| \leq \|T^N - T^{N+k}\|
    \leq \sum_{n=N+1}^{N+k}\|T^n - T^{n-1}\| \leq q.
  \end{equation}
  Using Theorem \ref{thm:norm}, we see that $\lim_{k\to \infty} T^{N+k}(x) = P_{M}(T^N(x))$. On the other hand, recall that $P_M P_{M_i} = P_M$ for all $i=1,\ldots,m$ (see \cite[Lemma 9.2]{Deutsch2001}). Thus $P_M(T^N(x)) = P_M(x)$. Therefore, by passing to the limit as $k \to \infty$ and then, by taking the supremum over $\|x\| = 1$ on the left-hand side of \eqref{pr:betaEquiv:ineq}, we arrive at
  \begin{equation}\label{}
    \|T^N - P_M\| \leq q < 1.
  \end{equation}
  By applying Theorem \ref{thm:linear} to $T^N$ (seen as the product of $m \cdot N$ projections) and $M$ (seen as the intersection of $m\cdot N$ subspaces), we get
  \begin{equation}\label{}
    \sum_{i=1}^{m}M_i^\perp = \underbrace{\sum_{i=1}^{m}M_i^\perp  + \cdots +  \sum_{i=1}^{m}M_i^\perp}_{N \text{ times}} \text{ is closed},
  \end{equation}
  which completes the proof of the implication.

  \bigskip
  ``$\{(ii), (iii), (v)\} \Rightarrow (i)$''. We begin by showing that
  \begin{equation}\label{pr:thresholdsEps:ineq1}
    \|T^k-T^{k-1}\|^2 \leq \frac C n \cdot \max_{i,j=1,\ldots,m} \|P_{M_i^\perp} T^{2n} P_{M_j^\perp}\|
  \end{equation}
  for some $C >0$, where $k \geq 4$ and where $n := \lfloor (k-1)/3\rfloor$ (so that $3n \leq k-1$). Indeed, let $x \in \mathcal H$ be such that $\|x\| = 1$. Observe that $I - T$ commutes with $T$ and that
  \begin{equation}\label{pr:thresholdsEps:I-T}
     I-T = \sum_{i=1}^{m}(Q_{i-1} - Q_i) = \sum_{i=1}^{m}P_{M_i^\perp}Q_{i-1},
  \end{equation}
  where $Q_0 := I$ and $Q_i := P_{M_i}\ldots P_{M_1}$, $i = 1,\ldots,m$. Using the fact that the orthogonal projection is idempotent and self-adjoint, and that $\|T\| \leq 1$, we get
  \begin{align}\label{} \nonumber
    \|T^k(x) -  T^{k-1}(x)\|^2  & \leq \|(I-T)T^{3n}(x)\|^2  = \langle (I-T)T^{3n}(x), T^{2n}(I-T)T^n(x)\rangle \\ \nonumber
    & = \left\langle \sum_{i=1}^{m}P_{M_i^\perp}Q_{i-1} T^{3n}(x),\ T^{2n} \left(\sum_{j=1}^{m} P_{M_j^\perp}Q_{j-1} \right)
    T^n(x)\right \rangle \\ \nonumber
    & = \sum_{i=1}^{m} \sum_{j=1}^{m} \left\langle P_{M_i^\perp}Q_{i-1} T^{3n}(x),\ (P_{M_i^\perp} T^{2n} P_{M_j^\perp}) P_{M_j^\perp}Q_{j-1} T^n(x)\right \rangle \\
    & \leq m^2 \max_{i=1,\ldots,m} \|P_{M_i^\perp}Q_{i-1} T^{n}\|^2 \cdot \max_{i,j=1,\ldots,m} \|P_{M_i^\perp} T^{2n} P_{M_j^\perp}\|.
  \end{align}
  On the other hand, by Lemma \ref{lem:thresholds} (ii) applied to different orders of projections, we have
  \begin{equation}\label{pr:thresholdsEps:maxDist}
     \max_{i=1,\ldots,m} \|P_{M_i}^\perp Q_{i-1}T^{n}\|^2
     \leq \max_{i=1,\ldots,m} \|P_{M_i}^\perp (Q_{i-1}P_{M_m}\ldots P_{M_i})^{n}\|^2
     \leq \frac {C'} n,
  \end{equation}
  for some $C '>0$, which shows \eqref{pr:thresholdsEps:ineq1}.

  \bigskip
  Assume now that condition $(v)$ holds, that is,
  \begin{equation}\label{}
    \max_{i,j=1,\ldots,m} \|P_{M_i^\perp} T^{k} P_{M_j^\perp}\| \leq \frac{C'}{k^{1+\varepsilon}}
  \end{equation}
  for some $\varepsilon>0$ and some $C'>0$. Then,  using  \eqref{pr:thresholdsEps:ineq1} and knowing that $n \geq k/4$ (so that $1/n \leq 4/k$), we get
  \begin{equation}\label{}
    \|T^k - T^{k-1}\|^2 \leq \frac{C}{n} \cdot \frac{C'}{(2n)^{1+\varepsilon}}
    \leq \frac{4^{2+\varepsilon}C C'}{k^{2+\varepsilon}}.
  \end{equation}
  Thus we have arrived at condition $(i)$.

  \bigskip
  Assume now that condition $(ii)$ holds, that is,
  \begin{equation}\label{}
    \max_{i=1,\ldots,m} \|P_{M_i^\perp} T^{k} \| \leq \frac{C'}{k^{1/2+\varepsilon}}
  \end{equation}
  for some $\varepsilon>0$ and some $C'>0$. Then,
  \begin{align}\label{pr:thresholdsEps:ineq2}\nonumber
    \max_{i,j=1,\ldots,m} \|P_{M_i^\perp} T^{2n} P_{M_j^\perp}\|
    & \leq \max_{i=1,\ldots,m} \|P_{M_i^\perp} T^{n}\| \cdot \max_{j=1,\ldots,m} \|T^{n} P_{M_j^\perp}\| \\
    & \leq \frac{C'}{n^{1/2+\varepsilon}} \max_{j=1,\ldots,m} \|T^{n} P_{M_j^\perp}\|.
  \end{align}
  By Lemma \ref{lem:thresholds} (iii) we know that
  \begin{equation}\label{}
    \max_{j=1,\ldots,m} \|T^{n} P_{M_j^\perp}\| \leq \frac{C''}{\sqrt n}
  \end{equation}
  for some $C'' >0$. This, when combined with \eqref{pr:thresholdsEps:ineq1}, and with the inequality $n \geq k/4$, leads to
  \begin{equation}\label{}
    \|T^k-T^{k-1}\|^2 \leq \frac C n \cdot \frac{C'}{n^{1/2+\varepsilon}} \cdot \frac{C''}{n^{1/2}}
    \leq \frac{4^{2+\varepsilon}CC'C''}{k^{2+\varepsilon}}.
  \end{equation}
  We have again arrived at condition $(i)$.

  \bigskip
  An analogous argument can be used if we assume condition $(iii)$, that is, when
  \begin{equation}\label{}
    \max_{j=1,\ldots,m} \| T^{k} P_{M_j^\perp}\| \leq \frac{C'}{k^{1/2+\varepsilon}}
  \end{equation}
  for some $\varepsilon>0$ and some $C'>0$. Then, instead of \eqref{pr:thresholdsEps:ineq2}, we use
  \begin{align}\label{}\nonumber
    \max_{i,j=1,\ldots,m} \|P_{M_i^\perp} T^{2n} P_{M_j^\perp}\|
    & \leq \max_{i=1,\ldots,m} \|P_{M_i^\perp} T^{n}\| \cdot \max_{j=1,\ldots,m} \|T^{n} P_{M_j^\perp}\| \\
    & \leq \frac{C'}{n^{1/2+\varepsilon}} \max_{i=1,\ldots,m} \|P_{M_i^\perp} T^{n} \|
  \end{align}
  combined with \eqref{pr:thresholdsEps:ineq1} and Lemma \ref{lem:thresholds} (ii).

  \bigskip
  ``$(iv) \Rightarrow (v). $'' Assume that $(iv)$ holds. Then, in view of Lemma \ref{thm:normsInPS}, we also obtain condition $(ix)$. However, inequality \eqref{pr:thresholds:x:ToShow} together with  Lemma \ref{lem:thresholds} (viii)  lead us to condition $(x)$ with $\varepsilon' := \varepsilon/2 >0$. Again, thanks to Lemma \ref{thm:normsInPS} we obtain condition $(v)$ with $\varepsilon'>0$, which completes the proof.
\end{proof}

We now arrive at the main result of our paper.

\begin{theorem}\label{thm:main}
   For each $y_0 \in \mathcal H$,  the sequence $\{y_k\}_{k=0}^\infty$ defined by \eqref{int:yk} satisfies
  \begin{equation}\label{thm:main:rateAR1}
    \|y_k-y_{k-1}\| =  o (k^{-1})
  \end{equation}
  and
  \begin{equation}\label{thm:main:rateAVD1}
    {\textstyle\sqrt{\frac 1 m \sum_{i=1}^m d^2(y_k,M_i)}} =  o (k^{-1/2}).
  \end{equation}
  Moreover, for each $y_0 \in  Y:= M \oplus \sum_{i=1}^{m}M_i^\perp$, the sequence $\{y_k\}_{k=0}^\infty$ defined by \eqref{int:yk} satisfies
  \begin{equation} \label{thm:main:rateYk}
    \|y_k  - P_M(y_0)  \| = \mathcal O(k^{-1/2}),
  \end{equation}
  \begin{equation}\label{thm:main:rateAR2}
    \|y_k-y_{k-1}\| =  \mathcal O(k^{-3/2})
  \end{equation}
  and
  \begin{equation}\label{thm:main:rateAVD2}
    {\textstyle\sqrt{\frac 1 m \sum_{i=1}^m d^2(y_k,M_i)}} =  \mathcal O(k^{-1}).
  \end{equation}
  Furthermore,  if $\sum_{i=1}^{m}M_i^\perp$ is not closed, then  all of the above-mentioned rates  \eqref{thm:main:rateAR1}--\eqref{thm:main:rateAVD2}  are best possible as the corresponding polynomials $k^{1/2}, k$ and $k^{3/2}$ cannot be replaced by $k^{1/2+\varepsilon}, k^{1+\varepsilon}$ and $k^{3/2+\varepsilon}$, respectively, for any $\varepsilon > 0$.
\end{theorem}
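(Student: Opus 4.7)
The theorem splits into three components: positive rates on all of $\mathcal H$, positive rates on $Y$, and the sharpness claim. The first two blocks are essentially book-keeping built on Sections \ref{sec:Pierra}--\ref{sec:subspaces}, while the sharpness is where the real work lies. For arbitrary $y_0 \in \mathcal H$, bound \eqref{thm:main:rateAR1} is precisely Theorem \ref{thm:ARMi}; for \eqref{thm:main:rateAVD1} I would apply Lemma \ref{lem:convSets} with $C_i = M_i$ and $z = P_M(y_0) \in M$, which gives
\[
\tfrac{1}{m}\sum_{i=1}^m d^2(y_k,M_i) \leq \tfrac{m}{2}\|y_k - y_{k-1}\| \cdot \|y_{k-1} - P_M(y_0)\|.
\]
The first factor on the right is $o(k^{-1})$ by Theorem \ref{thm:ARMi}, and the second is bounded (in fact $\to 0$ by Theorem \ref{thm:norm}), yielding $o(k^{-1})$ inside the square root and $o(k^{-1/2})$ after extracting it.

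For $y_0 \in Y$ I would decompose $y_0 = z + \sum_{i=1}^m u_i$ with $z \in M$ and $u_i \in M_i^\perp$. Since $\sum M_i^\perp \subset M^\perp$ we have $P_M(y_0) = z$; since $Tz = z$ and $u_i = P_{M_i^\perp}(u_i)$, this gives the identities
\[
y_k - P_M(y_0) = \sum_{i=1}^m T^k P_{M_i^\perp}(u_i), \qquad y_k - y_{k-1} = \sum_{i=1}^m (T^k - T^{k-1}) P_{M_i^\perp}(u_i),
\]
from which \eqref{thm:main:rateYk} and \eqref{thm:main:rateAR2} follow via the triangle inequality together with parts (iii) and (iv) of Lemma \ref{lem:thresholds}. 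Feeding these two rates back into Lemma \ref{lem:convSets} with $z = P_M(y_0)$ then gives $\mathcal O(k^{-3/2}) \cdot \mathcal O(k^{-1/2}) = \mathcal O(k^{-2})$ under the square root, producing \eqref{thm:main:rateAVD2}.

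The sharpness is the main obstacle, and the strategy is uniform throughout: assuming that an $\varepsilon$-improvement of one of the five rates holds pointwise on its claimed domain, I plan to upgrade it to an operator-norm bound strong enough to invoke Lemma \ref{lem:thresholdsEps} and therefore to force $\sum_{i=1}^m M_i^\perp$ to be closed, contradicting the standing hypothesis. For the two rates on all of $\mathcal H$ this is immediate from the uniform boundedness principle applied on the Banach space $\mathcal H$: an $\varepsilon$-improvement of \eqref{thm:main:rateAR1} delivers $\|T^k - T^{k-1}\| = \mathcal O(k^{-1-\varepsilon})$ -- condition (i) of Lemma \ref{lem:thresholdsEps} -- while an $\varepsilon$-improvement of \eqref{thm:main:rateAVD1} delivers $\|P_{M_i^\perp} T^k\| = \mathcal O(k^{-1/2-\varepsilon})$ for each $i$ -- condition (ii). The genuine difficulty lies with the rates on $Y$, because when $\sum M_i^\perp$ is not closed, $Y$ is dense in $\mathcal H$ but not itself complete, so the uniform boundedness principle cannot be applied on $Y$ directly.

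The remedy I would use is to route the uniform boundedness principle through the product space of Section \ref{sec:Pierra}. For any $\mathbf x = (x_1, \ldots, x_m) \in \mathbf H$, the element $y_0 := \tfrac{1}{m}\sum_{i=1}^m P_{M_i^\perp}(x_i)$ lies in $\sum_{i=1}^m M_i^\perp \subseteq Y$ and satisfies $P_M(y_0) = 0$, while the coordinate-wise formulas \eqref{thm:PDPC:eq}--\eqref{eq:PCPMperp} yield $\mathbf T^k P_{\mathbf D} P_{\mathbf C^\perp}(\mathbf x) = (T^k y_0, \ldots, T^k y_0)$, and hence $\|\mathbf T^k P_{\mathbf D} P_{\mathbf C^\perp}(\mathbf x)\| = \|y_k - P_M(y_0)\|$. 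An $\varepsilon$-improvement of \eqref{thm:main:rateYk} valid for every $y_0 \in Y$ therefore produces a pointwise bound on the Banach space $\mathbf H$, which the uniform boundedness principle upgrades to $\|\mathbf T^k P_{\mathbf D} P_{\mathbf C^\perp}\| = \mathcal O(k^{-1/2-\varepsilon})$ -- condition (viii) of Lemma \ref{lem:thresholdsEps}, and the desired contradiction. The same device, with $\mathbf T^k$ replaced by $\mathbf T^k - \mathbf T^{k-1}$, delivers the sharpness of \eqref{thm:main:rateAR2} via condition (ix); inserting an additional $P_{\mathbf C^\perp}$ on the left (so that $\|P_{\mathbf C^\perp} \mathbf T^k P_{\mathbf D} P_{\mathbf C^\perp}(\mathbf x)\|^2 = \tfrac{1}{m}\sum_{i=1}^m d^2(y_k, M_i)$) delivers the sharpness of \eqref{thm:main:rateAVD2} via condition (x).
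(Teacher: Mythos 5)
Your proposal is correct and takes essentially the same approach as the paper: the positive rates come from Theorem \ref{thm:ARMi}, Lemma \ref{lem:convSets} and Lemma \ref{lem:thresholds}, and the optimality is established exactly as in the paper's proof, by upgrading a hypothetical pointwise $\varepsilon$-improvement to an operator-norm bound via the uniform boundedness principle applied in the product space $\mathbf H$ (thereby circumventing the non-completeness of $Y$) and then invoking Lemma \ref{lem:thresholdsEps}. The only cosmetic deviations are that you use the scalar bounds (iii)--(iv) of Lemma \ref{lem:thresholds} together with the triangle inequality where the paper uses the product-space bounds (viii)--(x), and that you obtain \eqref{thm:main:rateAVD2} by feeding \eqref{thm:main:rateYk} and \eqref{thm:main:rateAR2} back into Lemma \ref{lem:convSets} instead of quoting Lemma \ref{lem:thresholds}(x); these are equivalent in view of Lemma \ref{thm:normsInPS}.
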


\begin{proof}
  Let $y_0 \in \mathcal H$. The statement \eqref{thm:main:rateAR1} is a repetition of Theorem \ref{thm:ARMi} while \eqref{thm:main:rateAVD1} follows from Lemma \ref{lem:convSets} (see \eqref{lem:convSets:ineq2}).

  We now proceed to proving \eqref{thm:main:rateYk}--\eqref{thm:main:rateAVD2}, all of which follow from Lemma \ref{lem:thresholds}. To this end, assume that  $y_0 \in Y$, say $y_0 = x_0 + \frac 1 m \sum_{i=1}^{m}x_i$, where $x_0 \in M$ and  $x_i \in M_i^\perp$. Moreover, let $\mathbf x := (x_1,\ldots,x_m) \in \mathbf H$ and let $\mathbf T$ be defined by \eqref{def:Tbold}.  Recall that $P_{M_i}P_M = P_M P_{M_i} = P_M$ and so $P_{M_i^\perp}P_M = P_M P_{M_i^\perp} = 0$ (use, for example, \cite[Lemma 9.2]{Deutsch2001}).  In particular, using the identities $x_0 = P_M(x_0)$ and $x_i = P_{M_i^\perp}(x_i)$, we have $T(x_0) = x_0$ and $P_M(x_i) = 0$. Then, by \eqref{eq:PCPMperp} and by Lemma \ref{lem:thresholds}, we get
  \begin{align}\label{}
    \|y_k  - P_M(y_0)\|
    & =  \left\|T^k \left(\frac 1 m \sum_{i=1}^{m} P_{M_i^\perp}(x_i)  \right) \right\|
    = \|\mathbf T^k P_{\mathbf D} P_{\mathbf C^\perp}(\mathbf x)\|
    = \mathcal O(k^{-1/2}),
  \end{align}
  \begin{align}\label{} \nonumber
    \|y_k - y_{k-1}\|
     & =  \left\|(T^k - T^{k-1})\left(\frac 1 m \sum_{i=1}^{m} P_{M_i^\perp}(x_i)  \right) \right\|\\
     & = \|(\mathbf T^k - \mathbf T^{k-1})P_{\mathbf D} P_{\mathbf C^\perp}(\mathbf x)\|
     = \mathcal O(k^{-3/2})
  \end{align}
  and,  since  $d(y_k, M_i) = \|P_{M_i^\perp}(y_k)\|$, we also get
  \begin{align}\label{} \nonumber
    \sqrt{\frac 1 m \sum_{i=1}^m d^2(y_k,M_i)}
    & = \sqrt{\frac 1 m \sum_{i=1}^m \left\|P_{M_i^\perp}T^k \left(\frac 1 m \sum_{i=1}^{m} P_{M_i^\perp}(x_i) \right) \right\|^2}  \\
    & = \|P_{\mathbf C^\perp}\mathbf T^k P_{\mathbf D} P_{\mathbf C^\perp}(\mathbf x) \|
    = O(k^{-1}).
  \end{align}

   The fact that \eqref{thm:main:rateAR1}--\eqref{thm:main:rateAVD2} cannot be improved follows directly from Lemma \ref{lem:thresholdsEps} and the uniform boundedness principle \cite[Theorem 2.2]{Brezis2011}. For the convenience of the reader we sketch the proof for  the average distance in \eqref{thm:main:rateAVD1} and in \eqref{thm:main:rateAVD2}.

  To this end, let $\varepsilon >0$ and suppose to the contrary that
  \begin{equation}\label{}
    {\textstyle\sqrt{\frac 1 m \sum_{i=1}^m d^2(y_k,M_i)}} =  o (k^{-1/2-\varepsilon})
  \end{equation}
  for all $y_0 \in \mathcal H$. Then, for each $\mathbf x = (x_1,\ldots,x_m) \in \mathbf H$ and for $y_0 := \frac 1 m \sum_{i=1}^{m}x_i$, we get
  \begin{equation}\label{}
    \sup_{k=1,2,\ldots}k^{1/2+\varepsilon} \|P_{\mathbf C^\perp}\mathbf T^k P_{\mathbf D}(\mathbf x) \|
    = \sup_{k=1,2,\ldots}k^{1/2+\varepsilon} {\textstyle\sqrt{\frac 1 m \sum_{i=1}^m d^2(y_k,M_i)}}  < \infty.
  \end{equation}
  By the uniform boundedness principle \cite[Theorem 2.2]{Brezis2011} applied to the family of operators $\{k^{1/2+\varepsilon} P_{\mathbf C^\perp}\mathbf T^k P_{\mathbf D} \colon  k=1,2,\ldots \}$, we obtain
  \begin{equation}\label{}
    \sup_{k=1,2,\ldots}k^{1/2+\varepsilon} \|P_{\mathbf C^\perp} \mathbf T^k P_{\mathbf D}\| < \infty,
  \end{equation}
  which corresponds to condition (vii)  in Lemma \ref{lem:thresholdsEps}. This implies that $\sum_{i=1}^{m}M_i^\perp$ is closed, which is in contradiction with our assumption.

  A similar argument can be used when we assume that
  \begin{equation}\label{}
    {\textstyle\sqrt{\frac 1 m \sum_{i=1}^m d^2(y_k,M_i)}} =  \mathcal O (k^{-1-\varepsilon})
  \end{equation}
  for all $y_0 \in  Y $. Indeed, for each $\mathbf x = (x_1,\ldots,x_m) \in  \mathbf H$ and  since $y_0 := \frac 1 m\sum_{i=1}^m  P_{M_i^\perp}(x_i) \in Y$,  we get
  \begin{equation}\label{}
    \sup_{k=1,2,\ldots}k^{1+\varepsilon} \| P_{\mathbf C^\perp} \mathbf T^k P_{\mathbf D}P_{\mathbf C^\perp}(\mathbf x) \|
    = \sup_{k=1,2,\ldots}k^{1+\varepsilon} {\textstyle\sqrt{\frac 1 m \sum_{i=1}^m d^2(y_k,M_i)}}  < \infty.
  \end{equation}
  Again, by using the uniform boundedness principle \cite[Theorem 2.2]{Brezis2011}, but this time applied to the family of operators $\{k^{1+\varepsilon}P_{\mathbf C^\perp} \mathbf T^k P_{\mathbf D} P_{\mathbf C^\perp}  \colon  k=1,2,\ldots \}$, we obtain
  \begin{equation}\label{}
     \sup_{k=1,2,\ldots}k^{1+\varepsilon} \|P_{\mathbf C^\perp}\mathbf T^k P_{\mathbf D}P_{\mathbf C^\perp}\| < \infty,
  \end{equation}
  which corresponds to condition (x) in Lemma \ref{lem:thresholdsEps}. This again leads to contradiction with our assumption.

  Analogously, we can show that  if
  \begin{equation}\label{}
    \|y_k - y_{k-1}\| = o(k^{-1-\varepsilon})
  \end{equation}
  holds for all $y_0 \in \mathcal H$, then we arrive at condition (i) (or (vi)) of Lemma \ref{lem:thresholdsEps}.   Furthermore, if any of the  conditions
  \begin{equation}\label{}
    \|y_k - P_M(y_0)\| = \mathcal O(k^{-1/2-\varepsilon}) \quad \text{or} \quad \|y_k - y_{k-1}\| = \mathcal O(k^{-3/2-\varepsilon})
  \end{equation}
  holds for all $ y_0 \in Y$, then we obtain conditions  (viii) or (ix)  from Lemma \ref{lem:thresholdsEps}, respectively.
\end{proof}

\begin{corollary} \label{cor:Xp}
  Assume that $\sum_{i=1}^{m}M_i^\perp$ is not closed. Let $X_p$ be defined as in Theorem \ref{thm:superPoly}, $p = 1,2,\ldots,$ and let $Y$ be defined as in Theorem \ref{thm:main}. Then the polynomial $k^{p}$ cannot be replaced by $k^{p+\varepsilon}$ in \eqref{thm:superPoly:eq} for any $\varepsilon >0$. In particular, the inclusions $X_{p+1} \subset X_p \subset Y$ are strict.
\end{corollary}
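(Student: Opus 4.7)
My plan is to prove the optimality claim by a contradiction in the spirit of the optimality argument from Theorem~\ref{thm:main}. For any $y_0\in X_p$, write $y_0 = x_0 + (I-T)^p(w)$ with $x_0\in M$ and $w\in \mathcal H$; since $P_M T = P_M$ and $T$ commutes with $I-T$, this yields $y_k - P_M(y_0) = (I-T)^p T^k(w)$. Suppose, for contradiction, that \eqref{thm:superPoly:eq} could be improved to $o(k^{-p-\varepsilon})$ for every $y_0\in X_p$; applying the uniform boundedness principle to the family $\{k^{p+\varepsilon}(I-T)^p T^k\}_{k\ge 1}$ then yields the operator bound
\begin{equation*}
  \|(I-T)^p T^k\|=\mathcal O(k^{-p-\varepsilon}).
\end{equation*}
It remains to show that this bound forces $\sum_{i=1}^m M_i^\perp$ to be closed, contradicting the standing hypothesis.

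The key step---and the main technical point of the argument---is a telescoping descent that lowers the exponent one unit at a time. From the identity
\begin{equation*}
  (I-T)^q T^k = (I-T)^{q-1} T^k - (I-T)^{q-1} T^{k+1},
\end{equation*}
summation over $k=N,\ldots,K$ and rearrangement give $(I-T)^{q-1}T^N = (I-T)^{q-1} T^{K+1} + \sum_{k=N}^K (I-T)^q T^k$. For $q\ge 2$, Lemma~\ref{lem:thresholds}(i) yields the tail estimate $\|(I-T)^{q-1}T^{K+1}\| \le 2^{q-2}\|(I-T)T^{K+1}\| = \mathcal O(K^{-1})$, so letting $K\to\infty$ one obtains
\begin{equation*}
  \|(I-T)^{q-1}T^N\| \le \sum_{k=N}^{\infty}\|(I-T)^q T^k\|.
\end{equation*}
Therefore, a bound $\|(I-T)^q T^k\| = \mathcal O(k^{-q-\varepsilon})$ with $q\ge 2$ self-improves to $\|(I-T)^{q-1}T^k\| = \mathcal O(k^{-(q-1)-\varepsilon})$. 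Iterating this descent $p-1$ times starting from $q=p$, one reaches $\|T^{k+1} - T^k\| = \mathcal O(k^{-1-\varepsilon})$, which is condition~(i) of Lemma~\ref{lem:thresholdsEps}, yielding the desired contradiction (for $p=1$ no iteration is needed).

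The strictness of the inclusions $X_{p+1}\subset X_p\subset Y$ then follows easily from the optimality. Equality $X_{p+1}=X_p$ would, via Theorem~\ref{thm:superPoly} applied to $X_{p+1}$, give the rate $o(k^{-(p+1)})$ for every $y_0\in X_p$, contradicting the just-proved optimality (e.g.\ with $\varepsilon = 1/2$). For the second inclusion, identity \eqref{pr:thresholdsEps:I-T}, namely $I-T = \sum_{i=1}^m P_{M_i^\perp} Q_{i-1}$, shows that $(I-T)(\mathcal H)\subseteq \sum_{i=1}^m M_i^\perp$, so that $X_p\subseteq X_1\subseteq Y$; equality $X_p=Y$ would, again via Theorem~\ref{thm:superPoly}, give $o(k^{-p})\subseteq o(k^{-1})$ on $Y$, contradicting the optimality of $\mathcal O(k^{-1/2})$ on $Y$ established in Theorem~\ref{thm:main}.
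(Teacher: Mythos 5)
Your proposal is correct and follows essentially the same route as the paper: the paper's inductive step is exactly your telescoping descent (it bounds $\|y_k-P_M(y_0)\|$ for $y_0\in X_{p-1}$ by $\sum_{i\ge k}\|T^i(I-T)^p(y)\|$ after invoking the uniform boundedness principle), iterated down to the exponent-one case and then contradicted through Lemma \ref{lem:thresholdsEps}. The only differences are organizational: you apply the uniform boundedness principle once and run the descent entirely at the operator-norm level (using Lemma \ref{lem:thresholds}(i) to kill the tail term, which also lets you keep $\varepsilon$ instead of halving it), and at the base case you appeal to condition (i) of Lemma \ref{lem:thresholdsEps} directly rather than to the optimality statement of Theorem \ref{thm:main}; the strictness argument is the same as the paper's.
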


\begin{proof}
  In order to show that the rate in \eqref{thm:superPoly:eq} cannot be improved we use  an induction  argument with respect to $p$.

  Suppose first that
  \begin{equation}\label{pr:Xp:firstStep}
    \|y_k - P_M(y_0)\| = o(k^{-1-\varepsilon})
  \end{equation}
  holds for all $y_0 \in X_1$ and some $\varepsilon>0$. Then, for all $y_0' \in \mathcal H$ with $y_k' := T^k(y_0')$, we obtain
  \begin{equation}\label{}
    \|y_k' - y_{k+1}'\| = \|T^k(y_0'-y_1')\| =o(k^{-1-\varepsilon'})
  \end{equation}
  as $y_0'-y_1' \in X_1$ and $P_M(y_0'-y_1') = 0$. This, however, contradicts Theorem \ref{thm:main} in view of which the rate in \eqref{thm:main:rateAR1} cannot be improved.

  Suppose now that
  \begin{equation}\label{pr:Xp:assumption}
    \|y_k - P_M(y_0)\| = o(k^{-p-\varepsilon})
  \end{equation}
  holds for all $y_0 \in X_p$ and some $\varepsilon > 0$, where $p \geq 2$. We show that an analogous relation holds for $p-1$  with $\varepsilon/2$.  Indeed, let $y_0 \in X_{p-1}$,  say $y_0 = x + (I-T)^{p-1}(y)$, where $x \in M$ and $y \in \mathcal H$.  Then, for each $n > k$, we get
  \begin{align}\label{pr:Xp:estimate1}
    \|y_k - P_M(y_0)\|  \leq \sum_{i=k}^{n}\|y_i-y_{i+1}\| + \|y_{n+1}-P_M(y_0)\|.
  \end{align}
  Note that
  \begin{equation}\label{}
    y_i - y_{i+1} = T^i(I-T)(y_0) = T^i(I-T)^p(y),
  \end{equation}
  where $(I-T)^p(y) \in X_p$. Moreover, because of our assumption (see \eqref{pr:Xp:assumption}) combined with the uniform boundeedness principle \cite[Theorem 2.2]{Brezis2011},
  \begin{equation}\label{}
    C := \sup_{k=1,2,\ldots} k^{p+\varepsilon} \|T^k(I-T)^p\| < \infty.
  \end{equation}
  Thus,  by letting $n \to \infty$ in \eqref{pr:Xp:estimate1}, we obtain
  \begin{align}\label{} \nonumber
    \|y_k - P_M(y_0)\| & \leq \sum_{i=k}^{\infty}\|T^i(I-T)^p(y)\|
    \leq \sum_{i=k}^{\infty}\frac C {i^{p+\varepsilon}} \\
    & \leq \int_{i=k-1}^{\infty} \frac C {x^{p+\varepsilon}}dx = \frac { C(p-1+\varepsilon)^{-1}} {(k-1 )^{p-1+\varepsilon}}
  \end{align}
  In particular, for all $y_0 \in X_{p-1}$, we get
  \begin{equation}\label{pr:Xp:toShow}
    \|y_k - P_M(y_0)\| = o(k^{-(p-1)-\varepsilon/2}),
  \end{equation}
  as claimed.

  By repeating the above-mentioned argument, we arrive at \eqref{pr:Xp:firstStep} with some $\varepsilon'>0$. Consequently, we have shown that the rate in \eqref{thm:superPoly:eq} cannot be improved.

  Observe that the latter statement implies that the subspaces $X_p$ are distinct for different values of $p = 1,2,\ldots$. Indeed, if we suppose otherwise, that $X_p = X_{p+1}$ for some $p \geq 1$, then this would imply \eqref{pr:Xp:assumption} with $\varepsilon = 1$. However, as we have shown above, this situation cannot happen.

  Similarly, if $X_p = Y$ for some $p\geq 1$, then this would imply that $\|y_k-P_M(y_0)\| = o(k^{-1/2 - \varepsilon})$ for all $y_0 \in Y$, where $\varepsilon = p - 1/2$. This however would contradict Theorem \ref{thm:main} in view of which the rate in \eqref{thm:main:rateYk} cannot be improved. We note here that the inclusion $X_1 \subset Y$ can be easily deduced from \eqref{pr:thresholdsEps:I-T}.
\end{proof}

The following result provides an alternative explanation for the fact that the rates of \eqref{thm:main:rateAR1}--\eqref{thm:main:rateAVD2} cannot be improved.

\begin{theorem} \label{thm:main2}
  Assume that $\sum_{i=1}^{m} M_i^\perp$ is not closed. Then there is a dense subset $U$ of $\mathcal H$ such that for each $y_0 \in U$ the sequence $\{y_k\}_{k=0}^\infty$ defined in \eqref{int:yk} satisfies
  \begin{equation}\label{thm:main2:rateAR}
    \limsup_{k \to \infty} k^{1+\varepsilon}\|y_k-y_{k-1}\| = \infty
  \end{equation}
  and
  \begin{equation}\label{thm:main2:rateMaxAVD1}
    \limsup_{k \to \infty} k^{1/2+\varepsilon} {\textstyle\sqrt{\frac 1 m \sum_{i=1}^m d^2(y_k,M_i)}} = \infty
  \end{equation}
  for all $\varepsilon >0$. Moreover, there is a dense subset $V$ of $Y = M \oplus \sum_{i=1}^{m}M_i^\perp$ such that for each $y_0 \in V$ the sequence $\{y_k\}_{k=0}^\infty$ defined in \eqref{int:yk} satisfies
  \begin{equation}\label{thm:main2:rateYk}
    \limsup_{k \to \infty} k^{1/2+\varepsilon} \|y_k-P_M(y_0)\| = \infty,
  \end{equation}
  \begin{equation}\label{thm:main2:rateAR2}
    \limsup_{k \to \infty} k^{3/2+\varepsilon}\|y_k-y_{k-1}\| = \infty
  \end{equation}
  and
  \begin{equation}\label{thm:main2:rateMaxAVD2}
    \limsup_{k \to \infty} k^{1+\varepsilon} {\textstyle\sqrt{\frac 1 m \sum_{i=1}^m d^2(y_k,M_i)}} = \infty
  \end{equation}
  for all $\varepsilon >0$.
\end{theorem}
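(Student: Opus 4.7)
The plan is a Banach--Steinhaus argument combined with Baire category, carried out in the product space $\mathbf H$ for the second part in order to sidestep the fact that $Y$ is not complete when $\sum_{i=1}^m M_i^\perp$ fails to be closed.

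For the dense subset $U\subset\mathcal H$, I would apply the uniform boundedness principle to the two families
\begin{equation*}
 \bigl\{k^{1+\varepsilon}(T^k-T^{k-1}) : \mathcal H\to\mathcal H\bigr\}_{k\ge 1}
 \quad\text{and}\quad
 \bigl\{k^{1/2+\varepsilon}\,P_{\mathbf C^\perp}\mathbf T^k\iota : \mathcal H\to\mathbf H\bigr\}_{k\ge 1},
\end{equation*}
where $\iota(y):=(y,\ldots,y)$ denotes the diagonal embedding (so that $\|P_{\mathbf C^\perp}\mathbf T^k\iota(y_0)\|=\sqrt{\tfrac{1}{m}\sum_{i=1}^m d^2(y_k,M_i)}$). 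By Lemma \ref{lem:thresholdsEps}(i) and (vii), under the standing non-closedness hypothesis neither family is uniformly bounded after this rescaling, so Banach--Steinhaus yields, for each fixed $\varepsilon>0$, a dense $G_\delta$ set of initial points at which the corresponding supremum over $k$ is infinite. Intersecting over the two families and over $\varepsilon=1/n$, $n\in\mathbb N$, Baire category in $\mathcal H$ produces a dense $G_\delta$ set $U$ on which both \eqref{thm:main2:rateAR} and \eqref{thm:main2:rateMaxAVD1} hold for every $\varepsilon>0$ (using that $\sup_k a_k=\infty$ is equivalent to $\limsup_k a_k=\infty$ for nonnegative sequences).

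For the dense subset $V\subset Y$, the obstruction is that $Y$ with its inherited norm is not complete, so Banach--Steinhaus cannot be applied to operators on $Y$ directly. I would instead lift to $\mathbf H$ via the bounded linear surjection
\begin{equation*}
 \Phi:\mathbf H\to\textstyle\sum_{i=1}^m M_i^\perp,\qquad
 \Phi(\mathbf x):=\tfrac{1}{m}\textstyle\sum_{i=1}^m P_{M_i^\perp}(x_i).
\end{equation*}
Setting $y_0:=\Phi(\mathbf x)$, so that $P_M(y_0)=0$, the identities already established in the proof of Theorem \ref{thm:main} give
\begin{align*}
 \|y_k-P_M(y_0)\| &= \|\mathbf T^k P_{\mathbf D}P_{\mathbf C^\perp}\mathbf x\|, \\
 \|y_k-y_{k-1}\| &= \|(\mathbf T^k-\mathbf T^{k-1})P_{\mathbf D}P_{\mathbf C^\perp}\mathbf x\|, \\
 \sqrt{\tfrac{1}{m}\textstyle\sum_{i=1}^m d^2(y_k,M_i)} &= \|P_{\mathbf C^\perp}\mathbf T^k P_{\mathbf D}P_{\mathbf C^\perp}\mathbf x\|.
\end{align*}
By Lemma \ref{lem:thresholdsEps}(viii)--(x), the three corresponding rescaled families on $\mathbf H$ (with powers $k^{1/2+\varepsilon}$, $k^{3/2+\varepsilon}$, $k^{1+\varepsilon}$) are not uniformly bounded, and another round of Banach--Steinhaus together with Baire category in $\mathbf H$ yields a dense $G_\delta$ set $W\subset\mathbf H$ on which all three of \eqref{thm:main2:rateYk}--\eqref{thm:main2:rateMaxAVD2} hold for the associated $y_0=\Phi(\mathbf x)$.

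To conclude, I would set $V:=M+\Phi(W)\subset Y$. Density of $V$ in $Y$ follows at once from density of $W$ in $\mathbf H$ together with continuity of $\Phi$: given $y=x_0+w\in Y=M\oplus\sum_i M_i^\perp$, pick any preimage $\mathbf x_0$ of $w$ under $\Phi$ and approximate it by $\mathbf x\in W$; then $x_0+\Phi(\mathbf x)\in V$ approximates $y$. Because $P_M$ annihilates $\sum_i M_i^\perp$ and $T x_0=x_0$ for $x_0\in M$, each of the three quantities in \eqref{thm:main2:rateYk}--\eqref{thm:main2:rateMaxAVD2} depends only on $y_0-P_M(y_0)$, so the limsup conclusions established for $\Phi(\mathbf x)$ transfer verbatim to $x_0+\Phi(\mathbf x)$. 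The sole conceptual difficulty is precisely this non-completeness of $Y$; the product-space lift $\Phi$, combined with the fact that Lemma \ref{lem:thresholdsEps} already re-encodes each of the three quantities as the norm of a single operator on $\mathbf H$, is what makes the entire argument go through cleanly.
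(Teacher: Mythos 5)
Your proposal is correct and follows essentially the paper's own route: the strong (Baire-category) form of the uniform boundedness principle applied in the product space $\mathbf H$ to the rescaled operator families of Lemma \ref{lem:thresholdsEps}, followed by intersecting dense $G_\delta$ sets over $\varepsilon=1/n$, pushing forward via $\mathbf x\mapsto\frac1m\sum_i P_{M_i^\perp}(x_i)$ to define $V\subset Y$, and the same density and transfer arguments. The only difference is that the paper invokes the category argument just once per part (for the average-distance families $P_{\mathbf C^\perp}\mathbf T^kP_{\mathbf D}$ and $P_{\mathbf C^\perp}\mathbf T^kP_{\mathbf D}P_{\mathbf C^\perp}$) and then deduces the remaining limsup statements pointwise from Lemma \ref{lem:convSets} together with the upper bounds of Theorem \ref{thm:main}, whereas you run Banach--Steinhaus separately for conditions (i), (vii)--(x) and intersect the resulting dense $G_\delta$ sets -- an equally valid variant.
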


\begin{proof}
  We first define the subset $V$ and then show equalities \eqref{thm:main2:rateYk}--\eqref{thm:main2:rateMaxAVD2}. To this end, let $\{\varepsilon_n\}_{n=0}^\infty \subset (0,\infty)$ be such that $\varepsilon_n \downarrow 0$. By Lemma \ref{lem:thresholdsEps}, for each $n = 1,2,\ldots$, we have
  \begin{equation}\label{}
    \sup_{k=1,2,\ldots} k^{1+\varepsilon_n} \|P_{\mathbf C^\perp} \mathbf T^k P_{\mathbf D} P_{\mathbf C^\perp}\| = \infty.
  \end{equation}
  Consequently, by  applying  the strong contrapositive of the  uniform boundedness principle \cite[Theorem 5.4.10]{Simon2015}  (and the successive Remark on p. 399 in \cite{Simon2015}) to the family of operators $\{ k^{1/2+\varepsilon_n} P_{\mathbf C^\perp}\mathbf T^k P_{\mathbf D}P_{\mathbf C^\perp} \colon  k=1,2,\ldots \}$, we see that
  \begin{equation}\label{}
    \mathbf V_n := \left \{ \mathbf v \colon \sup_{k=1,2,\ldots} k^{1+\varepsilon_n} \|P_{\mathbf C^\perp} \mathbf T^k P_{\mathbf D} P_{\mathbf C^\perp}(\mathbf v)\| = \infty\right\}
  \end{equation}
  is a dense $G_\delta$ subset of $\mathbf H$. By the Baire category theorem \cite[Theorem 5.4.1]{Simon2015}, the subset $\mathbf V := \bigcap_{n=0}^\infty \mathbf V_n$ is also a dense $G_\delta$ subset of $\mathbf H$. In fact, we have
  \begin{equation}\label{pr:main2:boldV}
    \mathbf V = \left \{ \mathbf v \colon \limsup_{k \to \infty} k^{1+\varepsilon} \|P_{\mathbf C^\perp} \mathbf T^k P_{\mathbf D} P_{\mathbf C^\perp}(\mathbf v)\| = \infty \text{ for all } \varepsilon >0\right\}.
  \end{equation}
  We may now define the aforementioned subset $V$ in $\mathcal H$ by
  \begin{equation}\label{}
    V := \left\{ v = v_0 + \frac 1 m \sum_{i=1}^{m} P_{M_i^\perp} (v_i) \colon v_0 \in M \text{ and } \mathbf v = (v_1, \ldots, v_m) \in \mathbf V \right\} \subset Y.
  \end{equation}

  We show that $V$ is dense in $Y$, that is, for each $y \in Y$, there is a sequence $\{v_k\}_{k=0}^\infty \subset V$ such that $v_k \to y$. To this end, suppose that $y = x_0 + \frac 1 m \sum_{i=1}^{m} x_i$, where $x_0 \in M$ and where $x_i \in M_i^\perp$. Moreover, let $\mathbf x := (x_1,\ldots,x_m)$. Since $\mathbf V$ is a dense subset of $\mathbf H$, there is a sequence $\{\mathbf v_k\}_{k=0}^\infty \subset \mathbf V$, with $\mathbf v_k = (v_{k,1},\ldots, v_{k,m})$, satisfying $\mathbf v_k \to \mathbf x$ as $k \to \infty$.
  Equivalently, $v_{k,i} \to x_i$ as $k \to \infty$ for all $i = 1,\ldots,m$. In particular, for $ v_k := x_0 + \frac 1 m \sum_{i=1}^{m} P_{M_i^\perp}(v_{k,i}) \in V$, we get
  \begin{equation}\label{}
    \|v_k - y\| = \left\|\frac 1 m \sum_{i=1}^{m}P_{M_i^\perp}(v_{k,i}-x_i) \right\| \leq \frac 1 m \sum_{i=1}^{m}\|v_{k,i}-x_i\| \to 0
  \end{equation}
  as $k \to \infty$. Since $Y$ is a dense subset of $\mathcal H$, we have also established that $V$ is a dense in $\mathcal H$.

  We may now turn our attention to equalities \eqref{thm:main2:rateYk}--\eqref{thm:main2:rateMaxAVD2}. Note that for each $y_0 \in V$, say $y_0 = v_0 + \frac 1 m \sum_{i=1}^{m}P_{M_i^\perp}(v_i)$, we have
  \begin{equation}\label{}
    {\textstyle\sqrt{\frac 1 m \sum_{i=1}^m d^2(y_k,M_i)}}
    = \|P_{\mathbf C^\perp} \mathbf T^k P_{\mathbf D} P_{\mathbf C^\perp}(\mathbf v)\|,
  \end{equation}
  where $\mathbf v = (v_1,\ldots,v_m) \in \mathbf V$. This, when combined with \eqref{pr:main2:boldV}, shows \eqref{thm:main2:rateMaxAVD2}.
  On the other hand, by Theorem \ref{thm:main}, we see that
  \begin{equation}\label{pr:main2:yk}
    \|y_k - P_M(y_0)\| \leq \frac C {k^{1/2}}
    \quad \text{and} \quad
    \|y_k-y_{k-1}\| \leq \frac C {k^{3/2}}
  \end{equation}
  for some $C>0$, where $k = 1,2,\ldots$. Thus, by Lemma \ref{lem:convSets} (see \eqref{lem:convSets:ineq1}), we arrive at
  \begin{align}\label{pr:main2:lowerbound} \nonumber
    k^{2+\varepsilon} { \frac 1 m \sum_{i=1}^{m} d^2(y_k, M_i)}
    & \leq k^{2+\varepsilon} \frac m 2 \|y_k-y_{k-1}\| \|y_k - P_M(y_0)\| \\
    & \leq
    \begin{cases}\displaystyle
      \frac{Cm}{2 } k^{3/2 + \varepsilon} \|y_k-y_{k-1}\|\\ ~\\
      \displaystyle
      \frac{Cm}{2 } k^{1/2 + \varepsilon}\|y_k - P_M(y_0)\|.
    \end{cases}
  \end{align}
  After taking the $\limsup$ as $k \to \infty$ in \eqref{pr:main2:lowerbound}, and using \eqref{thm:main2:rateMaxAVD2}, we arrive at \eqref{thm:main2:rateYk} and \eqref{thm:main2:rateAR2}.

  \bigskip
  A similar argument can be used in order to define a dense subset $U$ on which equalities \eqref{thm:main2:rateAR} and \eqref{thm:main2:rateMaxAVD1} hold. Indeed, by Lemma \ref{lem:thresholdsEps}, we have
  \begin{equation}\label{}
    \sup_{k=1,2,\ldots} k^{1/2+\varepsilon_n} \|P_{\mathbf C^\perp} \mathbf T^k P_{\mathbf D}\| = \infty,
  \end{equation}
  where $\{\varepsilon_n\}_{n=0}^\infty$ is as above.  By using the above-mentioned strong contrapositive of the uniform boundedness principle, but this time applied to the family of operators  $\{ k^{1/2+\varepsilon_n}$ $P_{\mathbf C^\perp}\mathbf T^k P_{\mathbf D} \colon k=1,2,\ldots \}$, we obtain that
  \begin{equation}\label{}
    \mathbf U_n := \left\{ \mathbf u \colon \sup_{k=1,2,\ldots} k^{1/2+\varepsilon_n} \|P_{\mathbf C^\perp} \mathbf T^k P_{\mathbf D} (\mathbf u)\| = \infty \right\}
  \end{equation}
  is a dense $G_\delta$ subset of $\mathbf H$. By again invoking the Baire category theorem, the set $\mathbf U := \bigcap_{n=0}^\infty \mathbf U_n$ is a dense $G_\delta$ subset of $\mathbf H$. Since $\mathbf U$ satisfies
  \begin{equation}\label{}
    \mathbf U = \left\{ \mathbf u \colon \limsup_{k \to \infty} k^{1/2+\varepsilon} \|P_{\mathbf C^\perp} \mathbf T^k P_{\mathbf D} (\mathbf u)\| = \infty \text{ for all } \varepsilon >0\right\},
  \end{equation}
  it suffices to put
  \begin{equation}\label{}
    U := \left\{ u = u_0 + \frac 1 m \sum_{i=1}^{m} u_i \colon u_0 \in M \text{ and } \mathbf u = (u_1, \ldots, u_m) \in \mathbf U \right\}.
  \end{equation}
  It is not difficult to see that $U$ is a dense subset of $\mathcal H$ (because $\mathbf U$ is dense in $\mathbf H$). Moreover, for each $y_0 = u_0 + \frac 1 m \sum_{i=1}^{m} u_i \in U $, we have
  \begin{equation}\label{}
    k^{1+\varepsilon} \|P_{\mathbf C^\perp} \mathbf T^k P_{\mathbf D}(\mathbf u)\|^2
    = k^{1+\varepsilon} { \frac 1 m \sum_{i=1}^{m} d^2(y_k, M_i)}
    \leq k^{1+\varepsilon} \frac m 2 \|y_k-y_{k-1}\| \|y_0\|,
  \end{equation}
  where $\mathbf u = (u_1,\ldots,u_m) \in \mathbf U$. By taking the $\limsup$ as $k \to \infty$ we arrive at \eqref{thm:main2:rateAR} and \eqref{thm:main2:rateMaxAVD1}.
\end{proof}

\begin{remark}
  Thanks to the inequalities
  \begin{equation}\label{}
    {\textstyle\sqrt{\frac 1 m \sum_{i=1}^m d^2(y_k, C_i)}}
    \leq \max_{i=1,\ldots,m} d(y_k, M_i)
    \leq {\textstyle\sqrt{\sum_{i=1}^m d^2(y_k, C_i)}},
  \end{equation}
  which hold for all $y_0 \in \mathcal H$, we may equivalently replace the average distance by the maximum distance in Theorems \ref{thm:main} and \ref{thm:main2}.
\end{remark}

\begin{remark}[Lower Bounds] \label{rem:lowerBounds}
  Theorem \ref{thm:main2} implies that for each $y_0 \in U$, $\varepsilon >0$ and $C >0$, the lower bounds
  \begin{equation}\label{}
    \|y_k - y_{k-1}\| \geq \frac {C}{k^{1+\varepsilon}} \quad \text{and} \quad
    {\textstyle\sqrt{\frac 1 m \sum_{i=1}^m d^2(y_k, C_i)}} \geq \frac {C}{k^{1/2+\varepsilon}}
  \end{equation}
  hold for infinitely many $k's$. Similarly, for each $y_0 \in V$, $y_0 \in U$, $\varepsilon >0$ and $C >0$, the lower bounds
  \begin{equation}\label{}
    \|y_k - P_M(y_0)\| \geq \frac {C}{k^{1/2+\varepsilon}}, \quad
    \|y_k - y_{k-1}\| \geq \frac {C}{k^{3/2+\varepsilon}}
  \end{equation}
  and
  \begin{equation}\label{}
    {\textstyle\sqrt{\frac 1 m \sum_{i=1}^m d^2(y_k, C_i)}} \geq \frac {C}{k^{1+\varepsilon}}
  \end{equation}
  hold for infinitely many $k's$. This corresponds to \eqref{int:lowerBound}. We do not know whether it is possible to show that the above-mentioned lower bounds hold for all sufficiently large $k$'s. Equivalently, we do not know if $\limsup$ of Theorem \ref{thm:main2} can be replaced by $\liminf$. We leave this as an open problem.
\end{remark}

\begin{remark}
  We have become aware of a paper by Evron et al. \cite{Evronetal2022} in which the method of cyclic projections is studied in the context of machine learning. Although formulated in a different setting and employing different assumptions in its analysis (in particular, $\mathcal H = \mathbb R^d$), this paper is related to the results presented here.
\end{remark}

\section*{Appendix} \label{sec:Appendix}
\addcontentsline{toc}{section}{Appendix}
 In this section we sketch how to derive Theorems \ref{thm:ARMi} and \ref{thm:superPoly} in a real Hilbert space, having in mind that the corresponding results of \cite{BadeaSeifert2016} were established in a complex Hilbert space. We also present an alternative proof of Theorem \ref{thm:ARMi} by using \cite[Lemma 5.2]{Crouzeix2008}. For this purpose, we use a complexification argument. For more details concerning the complexification, we refer the reader to \cite{LunaRamirezShapiro2012}.

\bigskip
To this end, let $ \mathbf H_{\mathbb C}  := \mathcal H + i\mathcal H$ be the  (external)  complexification of $\mathcal H$ with scalar multiplication given by
\begin{equation}\label{}
  (\alpha + i\beta)(x+iy) := \alpha x - \beta y +i (\alpha y + \beta x)
\end{equation}
and inner product $\langle \cdot, \cdot \rangle_{\mathbb C}$ defined by
\begin{equation}\label{complex:innerProd}
  \langle x+iy, x'+iy'\rangle_{\mathbb C} := \langle x,x'\rangle + \langle y,y'\rangle
  + i(\langle x',y\rangle - \langle x,y'\rangle),
\end{equation}
where $\alpha,\beta \in \mathbb R $ and $x,y,x',y'\in \mathcal H$. Thus, the induced norm on $ \mathbf H_{\mathbb C} $, denoted by $\|\cdot\|_{\mathbb C}$, satisfies
\begin{equation}\label{complex:innerNorm}
  \|x+iy\|_{\mathbb C}^2 = \|x\|^2 + \|y\|^2
\end{equation}
for all $x+iy\in  \mathbf H_{\mathbb C} $. It is not difficult to see that $( \mathbf H_{\mathbb C} , \langle \cdot, \cdot \rangle_{\mathbb C})$ is indeed a complex Hilbert space.

\begin{proof}[Proof of Theorem \ref{thm:ARMi}]
For each $j = 1, \ldots, m$, let $\mathbf M_j  := M_j + i M_j$. Observe that $\mathbf M_j $ is a closed linear subspace of $ \mathbf H_{\mathbb C} $. Denote by $P_{\mathbf M_j }$ the orthogonal projection onto $\mathbf M_j $. Then, for each $ \mathbf z  = x+iy \in  \mathbf H_{\mathbb C} $, we have $ P_{\mathbf M_j}(\mathbf z)  = P_{M_j}(x) + i P_{M_j}(y)$. This implies that the product $\mathbf T := P_{\mathbf M_m} \ldots P_{\mathbf M_1} $ satisfies $ \mathbf T( \mathbf z)  = T(x) +iT(y)$, where $T$ is defined as in \eqref{int:T}.  Using induction,  we get
\begin{equation}\label{pr:ARMi:T}
    \mathbf T^k(\mathbf z) = T^k(x) + i T^k(y).
\end{equation}
By \cite[Remark 4.2(b) and Theorem 2.1]{BadeaSeifert2016}, we  have
\begin{equation}\label{}
  k \|\mathbf T^k(\mathbf z) - \mathbf T^{k-1}(\mathbf z) \|_{\mathbb C} \to 0 \quad \text{as } k \to \infty
\end{equation}
for all $ \mathbf z \in  \mathbf H_{\mathbb C} $. In particular, by taking $ \mathbf z :=  y_0 +i0$, we obtain
  \begin{equation}\label{}
    \|y_k - y_{k-1}\| = \|\mathbf T^k(\mathbf z) - \mathbf T^{k-1}(\mathbf z) \|_{\mathbb C}.
  \end{equation}
This implies Theorem \ref{thm:ARMi}.
\end{proof}

\begin{proof}[Alternative proof of Theorem \ref{thm:ARMi}]
  By \cite[Lemma 5.2]{Crouzeix2008} applied to the operator $ \mathbf T$, we have
  \begin{equation}\label{}
    \sum_{k=1}^{\infty} k \|\mathbf T^k(\mathbf z) - \mathbf T^{k-1}(\mathbf z) \|_{\mathbb C}^2 < \infty
  \end{equation}
for all $ \mathbf z \in \mathbf H_{\mathbb C}$. In particular, by taking $ \mathbf z :=  y_0 +i0$, and knowing that  the sequence  $\{\|y_k - y_{k-1}\|\}_{k=1}^\infty$ is decreasing, we have
  \begin{align}\label{} \nonumber
    k^2 \|y_k - y_{k-1}\|^2
    & \leq 2 k \lceil k/2 \rceil\|y_k - y_{k-1}\|^2
    \leq  4  \sum^{k}_{n= \lfloor k/2\rfloor + 1} \frac k 2 \|y_n - y_{n-1}\|^2 \\
    & \leq  4  \sum^{k}_{n= \lfloor k/2\rfloor + 1} n \|y_n - y_{n-1}\|^2
    =  4  \sum^{{k}}_{n= \lfloor k/2\rfloor + 1} n \|\mathbf T^n(\mathbf z) - \mathbf T^{n-1}(\mathbf z) \|^2_{\mathbb C} \to 0
  \end{align}
as $k \to \infty$.  Thus we have shown that $\|y_k-y_{k-1}\| = o(k^{-1})$ for all $y_0\in\mathcal H$.
\end{proof}

\begin{proof}[Proof of Theorem \ref{thm:superPoly}]
  It is not difficult to see that $\mathbf M_j^\perp = M_j^\perp + i M_j^\perp$ for all $j = 1,\ldots,m$; compare with \eqref{eq:Cperp}. Consequently,
  \begin{equation}\label{} \textstyle
    \sum_{j=1}^{m}\mathbf M_j^\perp = \sum_{j=1}^{m}M_j^\perp + i \sum_{j=1}^{m}M_j^\perp
  \end{equation}
  and thus
  \begin{equation}\label{} \textstyle
    \sum_{j=1}^{m} M_j^\perp \text{ is (not) closed} \Longleftrightarrow \sum_{j=1}^{m}\mathbf M_j^\perp \text{ is (not) closed}.
  \end{equation}

  Consider now the subspaces of $\mathbf H_{\mathbb C}$ given by $\mathbf X_p := \mathbf M \oplus (\mathbf I - \mathbf T)^p(\mathbf H_{\mathbb C})$ and $\mathbf X := \textstyle \bigcap_{p=1}^\infty \mathbf X_p$, where $\mathbf M := M+iM$ and $\mathbf I$ is the identity operator on $\mathbf H_{\mathbb C}$, $p=1,2,\ldots$. Obviously, $\mathbf X_p$ and $\mathbf X$ are the analogues of $X_p$ and $X$ considered in Theorem \ref{thm:superPoly}. In fact, we have
   \begin{equation}\label{}
    \mathbf X_p = X_p + i X_p \quad \text{and} \quad \mathbf X = X + i X.
  \end{equation}
  Consequently, we obtain
  \begin{equation}\label{}
    X_p \ (X) \text{ is dense in } \mathcal H \Longleftrightarrow \mathbf X_p \ ( \mathbf X) \text{ is dense in } \mathbf H_{\mathbb C}.
  \end{equation}
  By \cite[Theorem 4.3]{BadeaSeifert2016}, for each $\mathbf z \in \mathbf X_p$, we get
  \begin{equation}\label{}
    \|\mathbf T^k(\mathbf z) - P_{\mathbf M}(\mathbf z)\| = o(k^{-p})
  \end{equation}
  where $p = 1,2,\ldots$. Thus, for each $y_0 \in X_p$ it suffices to take $\mathbf z := y_0 +i0 \in \mathbf X_p$, to see that
  \begin{equation}\label{pr:superPoly:ykOnXp}
    \|T^k(y_0) - P_M(y_0)\| = \|\mathbf T^k(\mathbf z) - P_{\mathbf M}(\mathbf z)\| = o(k^{-p}),
  \end{equation}
  which shows \eqref{thm:superPoly:eq}. Similarly, for each $y_0 \in X$, it suffices to take $\mathbf z := y_0 +i0 \in \mathbf X$ to see that \eqref{pr:superPoly:ykOnXp} holds for all $p > 0$. Moreover, by \cite[Theorem 4.3]{BadeaSeifert2016}, we know that $\mathbf X_p$ and $\mathbf X$ are dense in $\mathbf H_{\mathbb C}$.
\end{proof}

\section*{Acknowledgement}
\addcontentsline{toc}{section}{Acknowledgement}
Both authors are grateful to two anonymous referees for their pertinent comments and helpful suggestions.

\section*{Funding}
\addcontentsline{toc}{section}{Funding}
This work was partially supported by the Israel Science Foundation (Grants 389/12 and 820/17), the Fund for the Promotion of Research at the Technion and by the Technion General Research Fund.

\section*{Data Availability}
\addcontentsline{toc}{section}{Data Availability}
Data sharing is not applicable to this article as no datasets were generated or analyzed during the current study.

\section*{Conflict of Interest}
\addcontentsline{toc}{section}{Conflict of Interest}
The authors declare that they have no conflict of interest.

\small

\addcontentsline{toc}{section}{References}

\begin{thebibliography}{10}

\bibitem{ArtachoCampoy2019}
{\sc F.~J. Arag\'{o}n~Artacho and R.~Campoy}, {\em Optimal rates of linear
  convergence of the averaged alternating modified reflections method for two
  subspaces}, Numer. Algorithms, 82 (2019), pp.~397--421.

\bibitem{Aronszajn1950}
{\sc N.~Aronszajn}, {\em Theory of reproducing kernels}, Trans. Amer. Math.
  Soc., 68 (1950), pp.~337--404.

\bibitem{BadeaGrivauxMuller2011}
{\sc C.~Badea, S.~Grivaux, and V.~M\"uller}, {\em The rate of convergence in
  the method of alternating projections}, Algebra i Analiz, 23 (2011),
  pp.~1--30.

\bibitem{BadeaSeifert2016}
{\sc C.~Badea and D.~Seifert}, {\em Ritt operators and convergence in the
  method of alternating projections}, J. Approx. Theory, 205 (2016),
  pp.~133--148.

\bibitem{BadeaSeifert2017}
{\sc C.~Badea and D.~Seifert}, {\em Quantified asymptotic behaviour of {B}anach
  space operators and applications to iterative projection methods}, Pure Appl.
  Funct. Anal., 2 (2017), pp.~585--598.

\bibitem{BargetzReichZalas2018}
{\sc C.~Bargetz, S.~Reich, and R.~Zalas}, {\em Convergence properties of
  dynamic string-averaging projection methods in the presence of
  perturbations}, Numer. Algorithms, 77 (2018), pp.~185--209.

\bibitem{BauschkeCruzNghiaPhanWang2014}
{\sc H.~H. Bauschke, J.~Y. Bello~Cruz, T.~T.~A. Nghia, H.~M. Phan, and
  X.~Wang}, {\em The rate of linear convergence of the {D}ouglas-{R}achford
  algorithm for subspaces is the cosine of the {F}riedrichs angle}, J. Approx.
  Theory, 185 (2014), pp.~63--79.

\bibitem{BauschkeCruzNghiaPhanWang2016}
{\sc H.~H. Bauschke, J.~Y. Bello~Cruz, T.~T.~A. Nghia, H.~M. Phan, and
  X.~Wang}, {\em Optimal rates of linear convergence of relaxed alternating
  projections and generalized {D}ouglas-{R}achford methods for two subspaces},
  Numer. Algorithms, 73 (2016), pp.~33--76.

\bibitem{BauschkeBorwein1996}
{\sc H.~H. Bauschke and J.~M. Borwein}, {\em On projection algorithms for
  solving convex feasibility problems}, SIAM Rev., 38 (1996), pp.~367--426.

\bibitem{BauschkeBorweinLewis1997}
{\sc H.~H. Bauschke, J.~M. Borwein, and A.~S. Lewis}, {\em The method of cyclic
  projections for closed convex sets in {H}ilbert space}, in Recent
  developments in optimization theory and nonlinear analysis ({J}erusalem,
  1995), vol.~204 of Contemp. Math., Amer. Math. Soc., Providence, RI, 1997,
  pp.~1--38.

\bibitem{BauschkeDeutschHundal2009}
{\sc H.~H. Bauschke, F.~Deutsch, and H.~Hundal}, {\em Characterizing
  arbitrarily slow convergence in the method of alternating projections}, Int.
  Trans. Oper. Res., 16 (2009), pp.~413--425.

\bibitem{BorodinKopecka2020}
{\sc P.~A. Borodin and E.~Kopeck\'{a}}, {\em Alternating projections, remotest
  projections, and greedy approximation}, J. Approx. Theory, 260 (2020),
  pp.~105486, 16.

\bibitem{Brezis2011}
{\sc H.~Brezis}, {\em Functional analysis, {S}obolev spaces and partial
  differential equations}, Universitext, Springer, New York, 2011.

\bibitem{Cegielski2012}
{\sc A.~Cegielski}, {\em Iterative methods for fixed point problems in
  {H}ilbert spaces}, vol.~2057 of Lecture Notes in Mathematics, Springer,
  Heidelberg, 2012.

\bibitem{Cohen2007}
{\sc G.~Cohen}, {\em Iterates of a product of conditional expectation
  operators}, J. Funct. Anal., 242 (2007), pp.~658--668.

\bibitem{Crouzeix2008}
{\sc M.~Crouzeix}, {\em A functional calculus based on the numerical range:
  applications}, Linear Multilinear Algebra, 56 (2008), pp.~81--103.

\bibitem{Deutsch2001}
{\sc F.~Deutsch}, {\em Best approximation in inner product spaces}, vol.~7 of
  CMS Books in Mathematics, Springer-Verlag, New York, 2001.

\bibitem{DeutschHundal2010}
{\sc F.~Deutsch and H.~Hundal}, {\em Slow convergence of sequences of linear
  operators {II}: arbitrarily slow convergence}, J. Approx. Theory, 162 (2010),
  pp.~1717--1738.

\bibitem{DeutschHundal2011}
{\sc F.~Deutsch and H.~Hundal}, {\em Arbitrarily slow convergence of sequences
  of linear operators: a survey}, in Fixed-point algorithms for inverse
  problems in science and engineering, vol.~49 of Springer Optim. Appl.,
  Springer, New York, 2011, pp.~213--242.

\bibitem{DeutschHundal2015}
{\sc F.~Deutsch and H.~Hundal}, {\em Arbitrarily slow convergence of sequences
  of linear operators}, in Infinite products of operators and their
  applications, vol.~636 of Contemp. Math., Amer. Math. Soc., Providence, RI,
  2015, pp.~93--120.

\bibitem{Evronetal2022} {\sc I.~Evron, E.~Moroshko, R.~Ward, N.~Srebro and D.~Soudry}, {\em How catastrophic can catastrophic forgetting be in linear regression?}, Proceedings of Thirty Fifth Conference on Learning Theory, PMLR 178:4028--4079, (2022).

\bibitem{FranchettiLight1986}
{\sc C.~Franchetti and W.~Light}, {\em On the von {N}eumann alternating
  algorithm in {H}ilbert space}, J. Math. Anal. Appl., 114 (1986),
  pp.~305--314.

\bibitem{Galantai2004}
{\sc A.~Gal\'{a}ntai}, {\em Projectors and projection methods}, vol.~6 of
  Advances in Mathematics (Dordrecht), Kluwer Academic Publishers, Boston, MA,
  2004.

\bibitem{Halperin1962}
{\sc I.~Halperin}, {\em The product of projection operators}, Acta Sci. Math.
  (Szeged), 23 (1962), pp.~96--99.

\bibitem{KayalarWeinert1988}
{\sc S.~Kayalar and H.~L. Weinert}, {\em Error bounds for the method of
  alternating projections}, Math. Control Signals Systems, 1 (1988),
  pp.~43--59.

\bibitem{LunaRamirezShapiro2012}
{\sc M.~E. Luna-Elizarrar\'{a}s, F.~Ram\'{\i}rez-Reyes, and M.~Shapiro}, {\em
  Complexifications of real spaces: general aspects}, Georgian Math. J., 19
  (2012), pp.~259--282.

\bibitem{Neumann1949}
{\sc J.~von Neumann}, {\em On rings of operators. {R}eduction theory}, Ann. of
  Math. (2), 50 (1949), pp.~401--485.

\bibitem{Pierra1984}
{\sc G.~Pierra}, {\em Decomposition through formalization in a product space},
  Math. Programming, 28 (1984), pp.~96--115.

\bibitem{Popa2012}
{\sc C.~Popa}, {\em Projection algorithms - classical results and developments:
  Applications to image reconstruction}, Lambert Academic Publishing, 2012.

\bibitem{ReichZalas2017}
{\sc S.~Reich and R.~Zalas}, {\em The optimal error bound for the method of
  simultaneous projections}, J. Approx. Theory, 223 (2017), pp.~96--107.

\bibitem{ReichZalas2021}
{\sc S.~Reich and R.~Zalas}, {\em Error bounds for the method of simultaneous
  projections with infinitely many subspaces}, J. Approx. Theory, 272 (2021),
  Paper No. 105648, 24 pp.

\bibitem{Simon2015}
{\sc B.~Simon}, {\em Real analysis}, A Comprehensive Course in Analysis, Part
  1, American Mathematical Society, Providence, RI, 2015.
\newblock With a 68 page companion booklet.


\end{thebibliography}

\end{document}